\newtheorem{teo}{Theorem}[section]
\newtheorem{prop}{Proposition}[section]
\newtheorem{defi}{Definition}[section]
\newtheorem{lemma}{Lemma}[section]
\newtheorem{example}{Example}[section]
\newtheorem{assumption}{Assumption}[section]
\newtheorem{remark}{Remark}[section]
\newcommand{\be}{\begin{eqnarray*}}
	\newcommand{\ben}{\begin{eqnarray}}
		\newcommand{\ee}{\end{eqnarray*}}
	\newcommand{\een}{\end{eqnarray}}
\newcommand{\al}{\begin{align*}}
	\newcommand{\eal}{\end{align*}}
\newcommand{\aln}{\begin{align}}
	\newcommand{\ealn}{\end{align}}
\newcommand{\fcal}{\mathcal{F}}
\DeclareMathOperator*{\supp}{supp}
\def\Om{\Omega}
\let\phi\varphi
\begin{document}


\title{Optimal Investment Decision Under Switching Regimes of Subsidy Support
}


\author{
Carlos Oliveira\thanks{Department of Mathematics and
CEMAT, Instituto Superior T\'ecnico, Universidade de
Lisboa, Av. Rovisco Pais, 1049-001 Lisboa, Portugal,
Email: carlosmoliveira@tecnico.ulisboa.pt}\and
Nicolas Perkowski\thanks{Institut f\"ur Mathematik, Humboldt-Universit\"at zu Berlin,
Unter den Linden 6, D-10099 Berlin,
Germany}}

\title{\sc Optimal Investment Decision Under Switching regimes of Subsidy Support}

\maketitle

\begin{abstract}
\noindent
We address the problem of making a managerial decision when the investment project is subsidized, which results in the resolution of an infinite-horizon optimal stopping problem of a switching diffusion driven by either an homogeneous or an inhomogeneous continuous-time Markov chain. 

We provide a characterization of the value function (and optimal strategy) of the optimal stopping problem. On the one hand, broadly, we can prove that the value function is the unique viscosity solution to a system of HJB equations. On the other hand, when the Markov chain is homogeneous and the switching diffusion is one-dimensional, we obtain stronger results: the value function is the difference between two convex functions.
\mbox{}
\newline
{\bf Keywords.\/} Optimal stopping, Switching diffusions, Investment Decisions.

\end{abstract}

\section{Introduction}
The optimal time to make managerial decisions has been broadly studied in the context of Real Options since the pioneering works of Dixit and Pindyck \cite{dixit1994investment} and Trigeorgis \cite{trigeorgis1996real}. Over time, while trying to fit the market's necessities, this type of models has become more and more complex from both the economic and the mathematical point of view. From the economic side, the number of sequential decisions studied in these models has increased and, from the mathematical angle, the associated stochastic control problems have become progressively more difficult to solve. 

In the past few years, several authors have introduced in real options models the existence of temporary subsidy support schemes in order to study their influence in the optimal investment time. This is particularly important in subsidized fields such as renewable energies, where there is an intense research activity (see, for instance, Boomsma, Meade and Fleten \cite{boomsma2012renewable}), Boomsma and Linnerud \cite{boomsma2015market}, Adkins and Paxson \cite{adkins2016subsidies}, Fleten, Linnerud, Moln{\'a}r and Nygaard \cite{fleten2016green}, Kitzing, Juul, Drud, Boomsma \cite{kitzing2017real} and Guerra, Kort, Nunes and Oliveira \cite{guerra2017hysteresis}).

Following the previously cited authors, we formulate an investment model in a more general sense, where we assume that: (1) there are various different levels of subsidy, (2) the coefficients of the dynamic relative to the economic indicator change with the level of subsidy and (3) the follow-up of the firm's situation is influenced by the time since the previous evaluation.  In consequence, we formulate our model as an infinite-horizon optimal stopping problem where the uncertainty is generally modeled by a switching diffusion driven by an inhomogeneous continuous-time Markov chain. 

There are a few articles on optimal stopping problems for switching diffusions, covering different topics of financial mathematics. On the one hand, Eloe, Liu, Yatsuki, Yin and Zhang \cite{eloe2008optimal}, Guo \cite{guo2001explicit}, and Guo and Zhang \cite{guo2004closed} give explicit solutions for a few particular problems; on the other hand, Pemy \cite{pemy2014optimal}, Pemy and Zhang \cite{pemy2006optimal}, and Liu \cite{liu2016optimal} show that, in certain conditions, the value function for the correspondent optimal stopping problem is a viscosity solution to a system of Hamilton-Jacobi-Bellman (HJB) equations. Very recently, Egami and Kevkhishvili \cite{egami2017optimal} show that these type of problems can be reduced to a set of optimal stopping problems without a switching regime.

In this work, we show that, in general, the value function is time-dependent and the unique viscosity solution to a system of HJB equations. Additionally, when the continuous-time Markov chain is homogeneous and when the diffusion is one-dimensional, the value function is the difference of two convex functions and the time-dependence is lost.

We organize the text as follows: in Section \ref{The stochastic process}, we describe the stochastic process that we consider; in Section \ref{Optimal stopping problem}, we define the optimal stopping problem and some of the required assumptions; in Section \ref{HJB equations} we prove that the value function is the unique viscosity solution to a system of HJB equations and, finally, in Section \ref{The One Dimensional Case}, we discuss the optimal stopping problem in the homogeneous and one-dimensional case.



\section{The stochastic process}\label{The stochastic process}
We consider an investment project enrolled in an assistant program where there are $k$ different levels of subsidy.  The process $\theta=\{\theta_s:\, s\geq 0\}$, which provides the information concerning the level of subsidy at the current moment, is such that
\begin{align}\label{theta}
	\begin{cases}
		&\theta_s\in \Theta\equiv\{1,\ldots,k\}, \quad \text{for each }s\geq 0,\\
		&\theta \text{ is a c\`adl\`ag process.}\\
	\end{cases}
\end{align}
To completely characterize the Markov chain $\theta$, we introduce the process $\{\nu_n\,:\,n\in\mathbb{N}_0\}$, where $\nu_n$, with $n\in\mathbb{N}$, is the time until the $n^{\text{th}}-$transition of Markov chain $\theta$, defined by
$$\nu_1=\inf\{s>0\,:\,\theta_{s^-}\neq\theta_s\}\quad\text{and}\quad\nu_n=\inf\{s>\nu_{n-1}\,:\,\theta_{s^-}\neq\theta_s\}.$$
We assume that for every $j,m\in\Theta$
\begin{align*}
	&P(\nu_n-\nu_{n-1}\leq s\,\vert\, \theta_{\nu_{n-1}}=j)=1-e^{-\int_0^s\lambda_j(u)du},\quad\text{for all }s\geq 0,\\
	&P(\theta_{\nu_n}=m\,\vert\,\theta_{\nu_{n-1}}=j)=p_{j,m}(\nu_{n}-\nu_{n-1}),
\end{align*}
where $\lambda_j:[0,\infty)\to[0,\infty)$ is continuous and  $p_{j,k}\,:\,[0,\infty)\to[0,1]$ is a continuous function such that $\sum_{m=1}^kp_{j,m}(s)=1$ and $p_{j,j}(s)=0$, for all $s>0$.  Additionally, we consider that, for every $n_1\neq n_2\in\mathbb{N}$, the random variables $(\nu_{n_1}-\nu_{n_1-1})\text{ and } (\nu_{n_2}-\nu_{{n_2}-1})$ are independent.

The investment project operates in a random environment characterized by an economic indicator, which is modeled by a  $n-$dimensional stochastic process  $X=\{X_s:s\geq 0\}$. This process solves the switching stochastic differential equation (SDE)
\begin{equation}\label{Eq SDE}
	dX_s=\alpha(X_s,\theta_s)ds+\sigma(X_s,\theta_s)dW_s,
\end{equation}
taking values in the open set $D\subseteq\mathbb{R}^n$, where $W=\{W_s,s\geq 0\}$ is an $m-$dimensional Brownian motion independent of $\theta$ and where $\alpha:D\to\mathbb{R}^n$ and $\sigma:D\to\mathbb{R}^{n\times m}$ are Borel measurable functions. Therefore, we build this model on a complete filtered probability space $(\Om, \fcal, (\fcal_s)_{s\geq 0}, P)$ satisfying the usual conditions and supporting the independent processes $\theta$ and $W$.

The next assumption characterizes the solution of the switching SDE \eqref{Eq SDE}. Some results concerning the existence and uniqueness of solutions to switching diffusions may be found in Mao and Yuan \cite{mao2006stochastic}, and Yin and Zhu \cite{yin2010hybrid}. Additionally, in Kallenberg \cite{kallenberg2006foundations}, Karatzas and Shreve \cite{karatzas2012brownian} and Krylov \cite{krylov2008controlled}, one can find results concerning the existence and uniqueness of SDEs without switching regimes. 
\begin{assumption}\label{Assumption 1}
	The Borel measurable functions $\alpha:D\times\Theta\to\mathbb{R}^n$ and $\sigma:D\times\Theta\to\mathbb{R}^{n\times m}$ are 
	such that the SDE \eqref{Eq SDE}, for each initial condition, has a unique strong solution $(W,X)$ on the filtered probability space $({\Om}, {\fcal}, ({\fcal}_s)_{s\geq 0}, {P})$ that remains in $D$ for all times.
	Additionally, we assume that 
	\begin{align*}
		\vert \alpha(x,i)-\alpha(y,i)\vert+\Vert\sigma(x,i)-\sigma(y,i)\Vert\leq L\vert x-y\vert.
	\end{align*}
\end{assumption}

For any set $I\subseteq D$ we define the ${\cal F}_s-$stopping time 
\begin{equation*}
	T^I\equiv\inf\{s\geq 0:X_s\in\partial I\} \text{ with } X_0=x\in I,
\end{equation*}
where  $\partial I$ is obtained by considering the topology on $D$,
which is the trace of the usual topology on $\mathbb{R}^n$. If $I=D$, then $\partial I=\emptyset$, since $D$ is open in the usual topology on $\mathbb{R}^n$.
In addition, we assume that $D$ is such that $P(T^A<\infty)>0$, for all open $A\subsetneq D$ and $x\in A$.

The process $(X,\theta)$ is not, in general, a Markov process, because it is never known how much time was spent since the last transition in the Markov chain. Therefore, we introduce the process $\zeta=\{\zeta_s,\,s\geq t\}$, which represents the time spent from the last change in the level of subsidy until the moment $s$, defined by 
\begin{equation*}\label{process zeta}
	\zeta_s=s-\nu^s,\,s>0,
\end{equation*}
where
$\nu^{s}\equiv\sup\{\nu_n\,:\nu_n\leq s, n\in\mathbb{N}\}$ is an ${\cal F}_s-$stopping time. 
Unless otherwise stated, we will work with the process $({X},\theta,\zeta)$, which is the Markovian representation of the process $({X},\theta)$.

\section{Optimal stopping problem}\label{Optimal stopping problem}
In this section, we formulate the stochastic optimization problem that we are interested in. Thus, we consider that the cash-flow associated with the investment project is different in the $k$ different levels of subsidy. Therefore, the running payoff is represented by the function $\Pi:D\times\Theta\to\mathbb{R}$, and the cost of abandonment is represented by $h:D\times\Theta\to\mathbb{R}$. Additionally, we represent the instantaneous interest rate with 
$r:D\times\Theta\to\mathbb{R}$.
\begin{assumption}\label{A functions}
	The functions $\Pi,h,r:D\times\Theta\to\mathbb{R}$ are such that 
	\begin{align}\nonumber
		&\Pi(\cdot,i),h(\cdot,i),r(\cdot,i)\in C( D),\quad \text{for all }i\in\Theta\\\nonumber
		&\exists\epsilon_i>0\text{ such that } r(\cdot,i)>\epsilon_i,\quad \text{for all }i\in\Theta.
	\end{align}
\end{assumption}
If the investment project is permanently abandoned at the moment $\tau$, where $\tau$ is a ${\cal F}_s-$stopping time,
its revenue is given by 
\begin{align*}
	&\int_0^\tau e^{-\rho_s}\Pi(X_s,\theta_s)ds-e^{-\rho_\tau}h(X_\tau,\theta_\tau){1_{\{\tau< \infty\}}},\\
	&\rho_s=\int_0^{s}r(X_u,\theta_u)du, \quad s\geq 0.
\end{align*}
Therefore, the expected outcome associated with the project, when the initial observation is $(X_0,\zeta_0,\theta_0)=(x,t,i)$,  is given by the functional 
\begin{align}\label{Eq expected outcome}
	J(x,t,i,\tau)&=E_{x,t,i}\left[\int_0^\tau e^{-\rho_s}\Pi(X_s,\theta_s)ds-e^{-\rho_\tau}h(X_\tau,\theta_\tau){1_{\{\tau< \infty\}}}\right].
\end{align}
Here, $E_{x,t,i}\left[\cdot\right]$ is the expected value conditional on $X_0=x$, $\zeta_0=t$ and $\theta_0=i$\footnote{Throughout the paper, we also use the notation $E_{x,i}[\cdot]$ (resp., $E_{t,i}[\cdot]$) representing the expected value conditional on $X_0=x$ and $\theta_0=i$ (resp., $\zeta_0=t$ and $\theta_0=i$).}.
Our main goal is to seek the ${\cal F}_s-$stopping time, $\tau^*$, maximizing the
expected outcome \eqref{Eq expected outcome} in a certain open and connected set $I$, which should satisfy the following property: the set $\partial I$ is regular for the process ${X}$ in the sense that, 
\begin{equation*}
	T^I=0,\text{ }P-\text{almost surely, for all }x\in \partial I.
\end{equation*} 
Notice that, in this formulation, the project is necessarily abandoned for $s> T^I$. Therefore, if ${\cal T}$ is the set of all ${\cal F}_s-$stopping times and ${\cal S}=\{\tau\wedge T^I:\tau\in{\cal T}\}$, we intend to find the value function $V^*$, verifying  
\begin{equation}\label{Optimal Stopping Problem}
	V^*(x,t,i)=\sup_{\tau\in {\cal S}} J(x,t,i,\tau), \quad (x,t,i)\in \overline{I}\times[0,\infty)\times\Theta.\footnote{From now on, we use the following notation: $\overline I=I\cup\partial I$.}
\end{equation}
Since the strategy $\tau\equiv 0$ (to stop immediately, regardless of the current state $(X_0,\zeta_0,\theta_0)$) verifies $J(x,t,i,0)=-h(x,i)$, it is obvious that $V^*\geq -h$. Thus, an optimal stopping time is given by the rule
\begin{align*}
	&\tau^*=\inf\{s\geq 0:V^*(X_s,\zeta_{s},\theta_s)\leq-h(X_s,\theta_s)\}.
\end{align*}

In what follows, for every real function $f$, we set $f^+=\max(0,f)$, $f^-=\max(0,-f)$. Thus $f=f^+-f^-$ and $\vert f\vert=f^++f^-$. The problem's well-posedness is guaranteed by introducing the following integrability conditions:
\begin{assumption}\label{Assumption : last one}
	The functions $\Pi,h,r:D\times\Theta\to\mathbb{R}$ are such that 
	\begin{align*}
		&E_{x,t,i}\left[\int_0^{T^I} e^{-\rho_s}\Pi^+({X}_t,\theta_t)ds\right]<\infty\quad and \\\label{uniform integrability eh}
		&\{ h(X_\tau,\theta_\tau)\}_{\tau\in {\cal S}}\quad\text{is a uniformly integrable family of random variables. }
	\end{align*}
\end{assumption}
For future reference, we notice that according to Assumption \ref{Assumption : last one}, for any initial condition $(X_0,\zeta_0,\theta_0)=(x,t,i)$,
\begin{equation*}\label{UI1}
	\left\{\int_0^\tau e^{-\rho_s}\Pi^+({X}_t,\theta_t)ds-e^{-\rho_\tau}h(X_\tau,\theta_\tau)\right\}_{\tau\in{\cal S}}
\end{equation*}
is a uniformly integrable family of random variables, meaning that there is a uniform integrability test function $f:[0,\infty)\to[0,\infty)$ (see Definition C.2 and Theorem C.3 in {\O}ksendal \cite{oksendal1998stochastic}) such that 
\begin{equation}\label{UI3}
	\sup_{\tau\in{\cal S}}E_{x,t,i}\left[f\left(\left\vert\int_0^\tau e^{-\rho_s}\Pi^+({X}_t,\theta_t)ds-e^{-\rho_\tau}h(X_\tau,\theta_\tau)\right\vert\right)\right]<\infty.
\end{equation}
To finalize this section, in the next proposition we establish that under the assumptions considered in this section, $\{V^*(X_\tau,\zeta_{\tau},\theta_\tau)\}_{\tau\in{\cal S}}$ is a uniformly integrable family of random variables.
\begin{prop}
	Let $V^*$ be the value function defined as in \eqref{Optimal Stopping Problem}. Then, $\{V^*(X_\tau,\zeta_{\tau},\theta_\tau)\}_{\tau\in{\cal S}}$ is a uniformly integrable family of random variables, for every initial condition $(X_0,\zeta_0,\theta_0)=(x,t,i)$.
\end{prop}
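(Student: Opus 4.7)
The plan is to decompose $V^{*} = V^{*+} - V^{*-}$ and to show that each of the families $\{V^{*}(X_\tau,\zeta_\tau,\theta_\tau)^{\pm}\}_{\tau \in \mathcal{S}}$ is uniformly integrable separately; since $|V^{*}| = V^{*+}+V^{*-}$, this gives the statement.

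The negative part is immediate. Taking $\tau\equiv 0$ in the definition of $V^{*}$ yields $V^{*}(x,t,i) \geq J(x,t,i,0) = -h(x,i)$, so
\[
V^{*}(X_\tau,\zeta_\tau,\theta_\tau)^{-} \;\leq\; h^{+}(X_\tau,\theta_\tau) \;\leq\; |h(X_\tau,\theta_\tau)|.
\]
The dominating family $\{|h(X_\tau,\theta_\tau)|\}_{\tau\in\mathcal{S}}$ is uniformly integrable by Assumption~\ref{Assumption : last one}, hence so is $\{V^{*}(X_\tau,\zeta_\tau,\theta_\tau)^{-}\}_{\tau\in\mathcal{S}}$.

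For the positive part, I would start from the pointwise upper bound
\[
V^{*}(x,t,i) \;\leq\; \sup_{\sigma\in\mathcal{S}} E_{x,t,i}\!\left[\int_0^\sigma e^{-\rho_s}\Pi^{+}(X_s,\theta_s)\,ds + e^{-\rho_\sigma}|h(X_\sigma,\theta_\sigma)|\,1_{\sigma<\infty}\right],
\]
obtained by replacing $\Pi$ by $\Pi^{+}$ and $-h$ by $|h|$ in the definition of $J$. Let $f$ denote the convex, nondecreasing, superlinear uniform-integrability test function from \eqref{UI3}. Using monotonicity of $f$ together with Jensen's inequality for the inner expectation, one obtains
\[
f\bigl(V^{*}(x,t,i)^{+}\bigr) \;\leq\; \sup_{\sigma\in\mathcal{S}} E_{x,t,i}\!\left[f\!\left(\int_0^\sigma e^{-\rho_s}\Pi^{+}\,ds + e^{-\rho_\sigma}|h|\,1_{\sigma<\infty}\right)\right],
\]
which is finite for every $(x,t,i)$.

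To transport this bound into a uniform estimate along stopping times $\tau\in\mathcal{S}$, I would use the strong Markov property of $(X,\zeta,\theta)$: conditional on $\mathcal{F}_\tau$, the process restarted at $\tau$ has the same law as $(X,\zeta,\theta)$ started from $(X_\tau,\zeta_\tau,\theta_\tau)$. Combining this with the tower property and the universal bound supplied by \eqref{UI3}, one should deduce that $\sup_{\tau\in\mathcal{S}} E[f(V^{*}(X_\tau,\zeta_\tau,\theta_\tau)^{+})] < \infty$; the de la Vall\'ee--Poussin criterion then yields uniform integrability of the positive part, closing the argument.

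The hardest step will be precisely this last Markov/tower argument, since essential suprema do not in general commute with expectations. A clean workaround is to invoke the standard fact that the Snell envelope of a uniformly integrable family is itself uniformly integrable, applied to the dominating payoff family appearing in \eqref{UI3}; both the boundedness $e^{-\rho_s}\leq 1$ (which follows from $r\geq \epsilon_i > 0$ in Assumption~\ref{A functions}) and the uniform integrability of $\{h(X_\tau,\theta_\tau)\}_{\tau\in\mathcal{S}}$ should enter, to avoid any circular use of the supremum.
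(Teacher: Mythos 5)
Your treatment of the negative part is clean and complete: $V^*\geq -h$ gives $V^{*-}\leq |h|$, and domination by the uniformly integrable family $\{h(X_\tau,\theta_\tau)\}_{\tau\in\mathcal S}$ from Assumption \ref{Assumption : last one} settles it. The gap is exactly where you say it is, in the positive part, and it is a genuine one: the pointwise bound $f\bigl(V^{*+}(x,t,i)\bigr)\leq\sup_{\sigma}E_{x,t,i}[f(\cdots)]$ cannot simply be pushed through the tower property, because after conditioning on $\mathcal F_\tau$ you are left with an (essential) supremum of conditional expectations sitting inside an outer expectation, and $E[\sup_\sigma(\cdot)]$ is not controlled by $\sup_\sigma E[(\cdot)]$. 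Your proposed repair via the Snell envelope (class (D) is preserved by passing to the Snell envelope of a class (D) payoff) is a legitimate standard fact, but it is not carried out here, and invoking it requires first identifying $V^*(X_\tau,\zeta_\tau,\theta_\tau)$ with the Snell envelope of the payoff process evaluated at $\tau$ --- which is precisely the strong-Markov/DPP identification you flag as the hard step, and which the paper only establishes later (Proposition \ref{DPP}).

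The paper's own proof sidesteps the supremum issue entirely by using the optimizer: it writes $V^*(y,s,j)=J(y,s,j,\tau^*)$ for the optimal stopping time $\tau^*$, so that by the strong Markov property $V^*(X_\tau,\zeta_\tau,\theta_\tau)$ becomes a single conditional expectation, to which Jensen's inequality for the convex test function $f$ and then the tower property apply directly, landing on the finite bound \eqref{UI3}. If you want to complete your argument along your own lines, the shortest route is to replace the supremum over $\sigma$ by the specific stopping time $\tau^*$ (or an $\epsilon$-optimal one) before applying $f$, exactly as the paper does; otherwise you must first prove the Snell-envelope identification, at which point the class-(D) preservation argument would indeed close the proof.
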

\begin{proof}
	We start by noting that, by definition,
	\begin{align*}
		V^*(x,t,i)&=E_{x,t,i}\left[ \int_0^{\tau^*} e^{-\rho_s}\Pi({X}_s,\theta_s)ds-e^{-\rho_{\tau^*}}h(X_{\tau^*},\theta_{\tau^*})\right]\\
		&\leq E_{x,t,i}\left[ \int_0^{\tau^*} e^{-\rho_s}\Pi^+({X}_s,\theta_s)ds-e^{-\rho_{\tau^*}}h(X_{\tau^*},\theta_{\tau^*})\right].
	\end{align*}
	Consequently, choosing a function $f:[0,\infty)\to[0,\infty)$ as in \eqref{UI3}, which is convex, we get, for any $\tau\in{\cal S}$,
	\begin{align}
		\nonumber E_{x,t,i}&\left[f\left(\left\vert V^*(X_\tau,\zeta_\tau,\theta_\tau)\right\vert\right)\right]\leq \\	\nonumber
		&~~~~~~~~~~~~~\leq E_{x,t,i}\left[f\left(\left\vert E_{X_\tau,\zeta_\tau,\theta_\tau}\left[ \int_0^{\tau^*} e^{-\rho_s}\Pi^+({X}_s,\theta_s)ds-e^{-\rho_{\tau^*}}h(X_{\tau^*},\theta_{\tau^*})\right]\right\vert\right)\right]\\\label{UI auxiliary}
		&~~~~~~~~~~~~~\leq E_{x,t,i}\left[f\left(\left\vert\int_0^{\tau^*} e^{-\rho_s}\Pi^+(X_s,\theta_s)ds-e^{-\rho_{\tau^*}}h(X_{\tau^*},\theta_{\tau^*})\right\vert\right)\right]<\infty.
	\end{align}
	The first inequality in \eqref{UI auxiliary} follows from the strong Markov property and the Jensen's inequality, while the second inequality follows from Equation \eqref{UI3}. 
\end{proof}

\section{HJB equations}\label{HJB equations}
In this section, our main goal is to provide the system of HJB equations associated with the optimal stopping problem \eqref{Optimal Stopping Problem}. Furthermore, we will prove that, under certain conditions, the value function $V^*$ is the unique viscosity solution to this system of HJB equations. 

In Section \ref{Dynamic programming principle}, we present a weak version of the dynamic programming principle (DPP) that we will use in the following sections. A general formulation of this DPP can be found in Bouchard and Touzi \cite{bouchard2011weak}.
\subsection{Dynamic programming principle}\label{Dynamic programming principle}
Consider the Markov process $({X},\theta,\zeta)$, and its infinitesimal generator ${\cal L}$, defined by 
\begin{equation}\label{Eq infinitesimal generator}
	({\cal L}\phi)(x,t,i)=\lim\limits_{u\downarrow 0}\frac{1}{u}E_{x,t,i}\left[\phi({X}_{u},\zeta_{u},{\theta_{u}})-\phi(x,t,i)\right],
\end{equation}
for all $\phi$ in the domain of $\cal L$. In the next proposition, we present an expression for ${\cal L}$. 
\begin{prop}\label{P infinitesimal generator}
	Let $({X},\theta,\zeta)$ be the $(n+1+1)$-dimensional process defined as in Section \ref{The stochastic process}. Then, the infinitesimal generator ${\cal L}$ defined in \eqref{Eq infinitesimal generator} is such that
	\begin{align*}
		({\cal L}\phi)(x,t,i)&=\frac{\partial\phi}{\partial t}(x,i,t)+\alpha(x,i)D{\phi}(x,t,i) +\frac{1}{2}Tr\left[\sigma\sigma^T(x,i)D^2{\phi}(x,t,i)\right]\\\nonumber
		&+\left(Q\phi\right)(x,t,i)\\
		\left(Q\phi\right)(x,t,i)&=\sum_{j\neq i}\lambda_{i,j}(t)\left(\phi(x,{0},j)-{\phi}(x,t,i)\right),\quad \text{for a fixed }i\in\Theta,
	\end{align*}
	where $\lambda_{i,j}(t)=p_{i,j}(t)\lambda_i(t)$, for every $t\geq 0$ and $\phi:\mathbb{R}^n\times\Theta\times[0,\infty)\to\mathbb{R}$ is such that $\phi(\cdot,\cdot,i)\in C^{2,1}_0(D\times[0,\infty))$\footnote{A function $\phi\in C^{2,1}_0(I\times[0,\infty))$ (resp., $\phi\in C^{2}_0(I\times[0,\infty))$) if $\phi\in C^{2,1}(I\times[0,\infty))$ (resp., $\phi\in C^{2}(I\times[0,\infty))$) and has compact support. }, for $i\in\Theta$. 
\end{prop}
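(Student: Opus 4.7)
The plan is to compute the limit in \eqref{Eq infinitesimal generator} by conditioning on the number of jumps of $\theta$ in $[0,u]$ and keeping only contributions of order $O(u)$. Let $A_u = \{\nu_1 > u\}$ denote the no-jump event and $B_u$ the exactly-one-jump event. Using the sojourn distribution of Section \ref{The stochastic process} and continuity of $\lambda_i$, I would first obtain
$P_{t,i}(A_u^c) = 1 - \exp\bigl(-\int_0^u \lambda_i(t+s)\,ds\bigr) = u\,\lambda_i(t) + o(u)$
and, by a standard competing-clocks estimate, $P_{t,i}(A_u^c \setminus B_u) = o(u)$. Since $\phi(\cdot,\cdot,j)$ has compact support and is bounded uniformly in $j\in\Theta$, the two-or-more-jumps contribution is $o(u)$ and can be discarded at the outset.

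On $A_u$, we have $\theta_s \equiv i$ on $[0,u]$, so $X$ solves the frozen-coefficient SDE driven by $\alpha(\cdot,i),\sigma(\cdot,i)$, and $\zeta_u = t+u$. Because $W$ and $\theta$ are independent, $A_u$ is independent of the driving noise of $X$ on $[0,u]$, so
\[
E_{x,t,i}\!\left[\phi(X_u,\zeta_u,\theta_u)\,\mathbf{1}_{A_u}\right] = P(A_u)\,E\!\left[\phi\bigl(X_u^{(i)}, t+u, i\bigr)\right],
\]
where $X^{(i)}$ is the frozen-coefficient diffusion starting at $x$. Applying It\^o's formula to $\phi(\cdot,\cdot,i)\in C_0^{2,1}$ then gives
$E[\phi(X_u^{(i)}, t+u, i)] = \phi(x,t,i) + u\,(\mathcal{L}_X\phi)(x,t,i) + o(u)$,
where $(\mathcal{L}_X\phi)(x,t,i)=\partial_t\phi + \alpha(x,i)\cdot D\phi + \tfrac12\mathrm{Tr}[\sigma\sigma^T(x,i)D^2\phi]$.

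On $B_u$, writing the jump time as $s\in[0,u]$, the post-jump label is $j$ with conditional probability $p_{i,j}(t+s)$ and $\zeta$ restarts at $0$. By continuity of $p_{i,j}$ and $\lambda_i$, continuity of the paths of $X$, and boundedness of $\phi$, the dominant term on $B_u$ is $\phi(x,0,j)$ with an error $o(1)$ uniform in $(s,j)$; combined with $P(B_u)=u\lambda_i(t)+o(u)$ this yields
\[
\frac{1}{u}\,E_{x,t,i}\!\left[\phi(X_u,\zeta_u,\theta_u)\,\mathbf{1}_{B_u}\right] \;\longrightarrow\; \lambda_i(t)\sum_{j\neq i} p_{i,j}(t)\,\phi(x,0,j)\quad\text{as }u\downarrow 0.
\]

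Assembling the three pieces and using $\sum_{j\neq i}p_{i,j}(t)=1$ to absorb the term $-\lambda_i(t)\phi(x,t,i)$ (produced by $(P(A_u)-1)/u \to -\lambda_i(t)$) into the jump sum yields exactly $\mathcal{L}\phi = \mathcal{L}_X\phi + Q\phi$, as announced. The main technical difficulty is controlling, uniformly in $s\in[0,u]$ and $j\in\Theta$, the $o(1)$ error on the one-jump event: one must simultaneously handle the distribution of the jump time, the post-jump label, and the increments of $X$ on $[0,s]$ and $[s,u]$. Compact support of $\phi$ together with the assumed continuity of $\lambda_i$ and $p_{i,j}$ makes this manageable, but this is the step where the bookkeeping is most delicate.
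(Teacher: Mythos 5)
Your argument is correct, but it takes a genuinely different route from the paper's. The paper first writes down a generalized It\^o decomposition for the full semimartingale $(X,\zeta,\theta)$ (Equation \eqref{ito formula}, via Theorem II.33 of Protter), takes expectations, and evaluates the expected jump sum over \emph{all} transition times $\nu_n$ by conditioning on $(\nu_{n-1},\theta_{\nu_{n-1}})$ and integrating against the conditional density of $\nu_n$; the generator is then read off from the definition. You instead perform a first-jump decomposition of $[0,u]$ (no jump / exactly one jump / at least two jumps), apply the classical It\^o formula only to the frozen-coefficient diffusion on the no-jump event, and extract $Q\phi$ from the one-jump event by continuity of $\lambda_i$, $p_{i,j}$ and of the paths of $X$. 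Your route is more elementary --- it never needs the semimartingale It\^o formula for the jump process --- and it makes transparent where each term of $\mathcal{L}\phi$ comes from; it also implicitly uses the correct conditional form $1-\exp\bigl(-\int_t^{t+u}\lambda_i(\omega)\,d\omega\bigr)$ of the first-jump law given $\zeta_0=t$, which is what the stated rate $\lambda_i(t)$ requires (the paper's Equation \eqref{probability nu1} as printed looks like a typo on this point). What the paper's approach buys is the decomposition \eqref{ito formula} and the ensuing Dynkin formula over a fixed horizon, which are reused repeatedly in Sections \ref{HJB equations} and \ref{The One Dimensional Case}; your argument yields only the $u\downarrow 0$ limit and would have to be iterated to recover those. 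One step you should make airtight is the $o(u)$ (rather than $O(u)$) error on the one-jump event: a plain Cauchy--Schwarz bound between $1_{B_u}$ and $\{|X_u-x|\ge\delta\}$ only gives $O(u)$. Instead, condition on $\sigma(\theta)$ (independent of $W$) and use that the moment bound $E\bigl[\sup_{s\le u}|X_s-x|^2\bigr]\le C(1+|x|^2)u$ holds uniformly over paths of $\theta$, since the Lipschitz constant in Assumption \ref{Assumption 1} is uniform in $i$; this gives $P\bigl(B_u\cap\{|X_u-x|\ge\delta\}\bigr)\le P(B_u)\,Cu/\delta^2=o(u)$, which closes the gap.
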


Before we prove Proposition \ref{P infinitesimal generator}, we note that the process $({X},\zeta,\theta)$ is a semimartingale (indeed $X$ is the sum of a martingale and a finite variation process, and $\zeta$ and $\theta$ are finite variation processes) and, consequently, admits a generalized It\^o decomposition (see Theorem II.33 from Protter \cite{protter1990stochastic}). Indeed, for any function $\phi:D\times[0,\infty)\times\Theta\to\mathbb{R}$ such that $\phi(\cdot,\cdot,i)\in C^{2,1}({D}\times[0,\infty))$ and any $u> 0$,
\begin{align}\nonumber
	\phi({X}_u,\zeta_{u},\theta_u)-\phi(x,t,i)&=\int_0^{u}\alpha(X_s,\theta_s)\cdot D{\phi}(X_s,\zeta_{s},\theta_s)+\frac{1}{2}Tr\left[\sigma\sigma^T(X_s,\theta_s)D^2{\phi}(X_s,\zeta_{s},\theta_s)\right]ds\nonumber\\\nonumber
	&+\int_{0}^u\frac{\partial\phi}{\partial t}(X_s,\zeta_{s},\theta_s)ds+\int_0^{u}D{\phi}(X_s,\zeta_{s},\theta_s)\sigma(X_s,\theta_s)dW_s\\
	&+\sum_{n\in\mathbb{N}}\left(\phi\left({X}_{{\nu_n}},0,\theta_{\nu_n}\right)-{\phi}\left(X_{{\nu_n^-}},\zeta_{\nu_n^-},\theta_{\nu_n^-}\right)\right)1_{\{u\geq\nu_n\}}.\label{ito formula}
\end{align}
\begin{proof}[Proof of Proposition \ref{P infinitesimal generator}] 
	Taking into account the It\^o formula presented in Equation \eqref{ito formula}, we get that	
	\begin{align*}
		E_{x,t,i}&\left[\phi({X}_{u},\zeta_{u},{\theta_{u}})-\phi(x,t,i)\right]=
		\\&=E_{x,t,i}\Bigg[\int_0^{u}\alpha(X_s,\theta_s)\cdot D{\phi}(X_s,\zeta_{s},\theta_s)+\frac{1}{2}Tr\left[\sigma\sigma^T(X_s,\theta_s)D^2{\phi}(X_s,\zeta_{s},\theta_s)\right]ds\nonumber\\
		&+\sum_{n\in\mathbb{N}}\left(\phi\left({X}_{{\nu_n}},0,\theta_{\nu_n}\right)-{\phi}\left(X_{{\nu_n^-}},\zeta_{\nu_n^-},\theta_{\nu_n^-}\right)\right)1_{\{u\geq\nu_n\}}\Bigg].
	\end{align*}
	Furthermore, we note that
	\begin{align}\label{jumps decomposition}
		E_{x,t,i}&\Bigg[\sum_{n\in\mathbb{N}}\left(\phi\left({X}_{{\nu_n}},0,\theta_{\nu_n}\right)-{\phi}\left(X_{{\nu_n^-}},\zeta_{\nu_n^-},\theta_{\nu_n^-}\right)\right)1_{\{u\geq\nu_n\}}\Bigg]=\\\label{jumps decomposition second line}
		=&E_{x,t,i}\Bigg[\left(\phi\left({X}_{{\nu_1}},0,\theta_{\nu_1}\right)-{\phi}\left(X_{{\nu_1^-}},\zeta_{\nu_1^-},\theta_{\nu_1^-}\right)\right)1_{\{u\geq\nu_1\}}\Bigg]\\
		\nonumber
		&+E_{x,t,i}\left[\sum_{n>1}E_{x,t,i}\Bigg[\left(\phi\left({X}_{{\nu_n}},0,\theta_{\nu_n}\right)-{\phi}\left(X_{{\nu_n^-}},\zeta_{\nu_n^-},\theta_{\nu_n^-}\right)\right)1_{\{u\geq\nu_n>\nu_{n-1}\}}\left\vert ({\nu_{n-1}},\theta_{\nu_{n-1}})\right.\right]\Bigg]
	\end{align}
	where, for $n>1$,
	\begin{align}\label{auxiliary jump}
		&E_{x,t,i}\left[\left(\phi\left({X}_{\nu_{n}},0,\theta_{\nu_{n}}\right)-{\phi}\left({X}_{\nu_{n}^-},\zeta_{\nu_{n}^-},\theta_{\nu_{n}^-}\right)\right)1_{\{u\geq\nu_n>\nu_{n-1}\}}\vert({\nu_{n-1}},\theta_{\nu_{n-1}})\right]=\\\nonumber
		&=\sum_{m=1}^{k}	E_{x,t,i}\left[\left(\phi\left({X}_{\nu_{n}},0,m\right)-{\phi}\left({X}_{\nu_{n}^-},\zeta_{\nu_{n}^-},\theta_{\nu_{n-1}}\right)\right)p_{\theta_{\nu_{n-1}},m}(\nu_n-\nu_{n-1})1_{\{u\geq\nu_n\}}\vert({\nu_{n-1}},\theta_{\nu_{n-1}})\right].
	\end{align}
	Since, for $n>1$, 
	\begin{align*}
		P\left(\nu_n<s\vert\left({\nu_{n-1}},\theta_{\nu_{n-1}}\right)\right)&=\begin{cases}
			1-e^{-\int_{\nu_{n-1}}^s\lambda_{\theta_{\nu_{n-1}}}(\omega-\nu_{n-1})d\omega},&\text{if }s>\nu_{n-1}\\
			0,&\text{if }s\leq\nu_{n-1}
		\end{cases}.
	\end{align*}
	Equation \eqref{auxiliary jump} can be given by
	\begin{align*}
		&\nonumber
		E_{x,t,i}\left[	\int_{\nu_{n-1}\wedge u}^uf_{\theta_{\nu_{n-1}}}(s)\sum_{m=1}^{k}\left(\phi\left({X}_{s},0,m\right)-{\phi}\left(X_{s},s,\theta_{\nu_{n-1}}\right)\right)p_{\theta_{\nu_{n-1}},m}(\nu_n-\nu_{n-1})ds\vert({\nu_{n-1}},\theta_{\nu_{n-1}})\right],
	\end{align*}
	where $f_{\theta_{\nu_{n-1}}}(s)=\lambda_{\theta_{\nu_{n-1}}}(s-\nu_{n-1})e^{\int_{\nu_{n-1}}^s\lambda_{\theta_{\nu_{n-1}}}(\omega-\nu_{n-1})d\omega}$, for $s>\nu_{n-1}$. Furthermore, as $E[1_{\{\nu_n\geq s\}}\vert({\nu_{n-1}},\theta_{\nu_{n-1}})]=e^{\int_{\nu_{n-1}}^s\lambda_{\theta_{\nu_{n-1}}}(\omega-\nu_{n-1})d\omega}$, for $s>\nu_{n-1}$,  Equation \eqref{auxiliary jump} can also be given by
	\begin{align}
		&\nonumber
		E_{x,t,i}\Bigg[	\int_{\nu_{n-1}\wedge u}^{\nu_n\wedge u}\lambda_{\theta_{\nu_{n-1}}}(s-\nu_{n-1})p_{\theta_{\nu_{n-1}},m}(\nu_n-\nu_{n-1})\times\\\label{jumps decomposition last part}
		&~~~~~~~~~~~~~~~~~~~~~~~~~~~~\times\sum_{m=1}^{k}\left(\phi\left({X}_{s},0,m\right)-{\phi}\left(X_{s},s,\theta_{\nu_{n-1}}\right)\right)ds\vert({\nu_{n-1}},\theta_{\nu_{n-1}})\Bigg].
	\end{align}
	A similar representation can be found for the expected value in \eqref{jumps decomposition second line}, since we have
	\begin{equation}\label{probability nu1}
		P\left(\nu_1<s\vert\left(\zeta_0,\theta_0\right)=\left(t,i\right)\right)=\begin{cases}
			1-e^{-\int_{0}^{s+t}\lambda_{{i}}(\omega)d\omega},&\text{if }s>0\\
			0,&\text{if }s\leq 0
		\end{cases}.
	\end{equation}
	Then, by using the definition of the infinitesimal generator in \eqref{Eq infinitesimal generator}, the result is straightforward.
\end{proof}
It can be useful to consider the operator $\tilde{\cal L}$ given by
\begin{align*}
	(\tilde{\cal L}\phi)(x,t,i)&=\lim\limits_{h \downarrow 0}\frac{1}{h}E_{x,t,i}\left[e^{-\rho_h}\phi({X}_h,\zeta_h,{{\theta_h}})-\phi(x,t,i)\right]\\
	&=-r(x,i)\phi(x,t,i)+({\cal L}\phi)(x,t,i).
\end{align*}
For future reference, we note that along the same lines of Proposition \ref{P infinitesimal generator}, we can prove that the Dynkin's formula, for the $(n+1+1)-$dimensional process $({X},\zeta,\theta)$, holds true and verifies
\begin{align*}
	E_{x,t,i}\left[e^{-\rho_h}\phi({X}_{u},\zeta_{u},{\theta_{u}})\right]&=\phi({x},t,i)+E_{x,t,i}\left[\int_t^{u}e^{-\rho_s}(\tilde{\cal L}\phi)(X_s,\zeta_s,\theta_s) ds\right].
\end{align*}

In Proposition \ref{DPP}, a weak version of the DPP for the optimal stopping problem \eqref{Optimal Stopping Problem} is presented. The proof relies on the Markov structure of the process $(X,\zeta,\theta)$ and we follow the exposition of Guerra \cite{ehrhardt2017novel} (pages 143-167) and Touzi \cite{touzi2012optimal}. Before we introduce the DPP, we state an auxiliary result concerning the continuity of the function $(x,t)\to J(x,t,i,\tau)$. If necessary, to highlight the dependence of $X_s$ on the initial condition $X_0=x$ and on the element $\omega\in\Omega$, we will write $X_s^x$ and $X_s^x(\omega)$, respectively.

\begin{lemma}\label{L continuity of J}
	The function $(x,t)\to J(x,t,i,\tau)$ is continuous, for every $\tau\in{\cal S}$ and $i\in\Theta$. 
\end{lemma}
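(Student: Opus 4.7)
My plan is to prove continuity by constructing a coupling on a single probability space under which, for sequences $x_n \to x$ and $t_n \to t$, the processes $(X, \theta, \zeta)$ started from $(x_n, t_n, i)$ converge almost surely to those started from $(x, t, i)$ in a suitable sense, and then to invoke the uniform integrability furnished by Assumption \ref{Assumption : last one} in order to exchange the limit with the expectation in \eqref{Eq expected outcome}.

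First I would handle the dependence on $x$. Keeping the path of $\theta$ fixed, Assumption \ref{Assumption 1} together with the standard Gronwall-based estimate for SDEs with Lipschitz coefficients yields
\[
E\Bigl[\sup_{s\leq T} |X_s^{x_n} - X_s^x|^2\Bigr] \leq C(T)\,|x_n - x|^2
\]
for every finite $T$, and this estimate survives conditioning on the independent chain $\theta$. Consequently, upon passing to a subsequence, $s\mapsto X_s^{x_n}$ converges a.s. to $s \mapsto X_s^x$, uniformly on compact time intervals.

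Next I would treat the dependence on $t = \zeta_0$. The only place $t$ enters is through the law of the first jump time $\nu_1$ via the continuous hazard rate $\lambda_i$, as seen in \eqref{probability nu1}, and through $p_{i,m}$ evaluated at the first inter-arrival; after $\nu_1$ the chain restarts with $\zeta = 0$ and the later inter-arrivals are drawn independently of $t$. Using inverse-CDF sampling with a single uniform random variable, I couple $\nu_1(t_n)$ to $\nu_1(t)$ so that $\nu_1(t_n) \to \nu_1(t)$ a.s. by continuity of $\lambda_i$, and I keep all subsequent inter-arrivals and destinations identical across $n$. Continuity of $p_{i,m}$ then yields a.s. convergence of the post-jump state, and hence a.s. Skorohod convergence of $(\theta^{t_n,i}, \zeta^{t_n,i})$ on any compact time interval. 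Combined with the previous paragraph, the continuity of $\Pi, h, r$ from Assumption \ref{A functions} and of the exponent $\rho_s$ deliver pointwise a.s. convergence of the running integrand $e^{-\rho_s}\Pi(X_s,\theta_s)$ at all $s$ that are not jump times of the limit $\theta$, hence for Lebesgue-almost every $s$, and of the terminal term $e^{-\rho_\tau} h(X_\tau, \theta_\tau)\mathbf{1}_{\{\tau < \infty\}}$ on the event $\{\tau \neq \nu_n\text{ for all }n\}$. The uniform integrability test function from \eqref{UI3} applied to both pieces of the integrand then permits passing the limit inside the expectation, giving the desired continuity.

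The main obstacle is the jump discontinuity of $\theta$: one must rule out that $\tau$ concentrates on the random jump times $\nu_n$, at which $\theta_\tau$ could differ from the a.s. limit of $\theta_\tau^{(n)}$. This is precisely where the continuity of $\lambda_i$ enters crucially, since it forces the conditional law of each $\nu_n$ to be atomless, so that $P(\tau = \nu_n) = 0$ for every $n$ and the set of bad $\omega$ has probability zero. A secondary technical point is the interaction with the exit time $T^I$ implicit in $\tau \in \mathcal{S}$, but because $\tau \wedge T^I \leq \tau$ and the same a.s.\ and uniform-integrability arguments apply verbatim, this causes no additional difficulty.
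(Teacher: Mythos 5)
Your treatment of the $x$-dependence (Gronwall, Doob, It\^o isometry, Kolmogorov) is exactly the paper's first step, but the way you handle the $t$-dependence and the passage to the limit contains a genuine gap. The crucial claim that continuity of $\lambda_i$ ``forces the conditional law of each $\nu_n$ to be atomless, so that $P(\tau=\nu_n)=0$ for every $n$'' is false for general $\tau\in{\cal S}$: a stopping time is allowed to depend on $\theta$, and $\tau=\nu_1$ (or $\tau=\nu_1\wedge T^I$) is a perfectly admissible element of ${\cal S}$ with $P(\tau=\nu_1)>0$. Atomlessness of $\nu_n$ only rules out coincidence with times that are \emph{independent} of $\nu_n$. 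Since your whole resolution of the ``jump discontinuity of $\theta$'' obstacle rests on this, the argument breaks precisely at the point you yourself identified as the main difficulty. There is a related ambiguity upstream: once you couple the chains started from $\zeta_0=t_n$ and $\zeta_0=t$, their jump times differ, and it is no longer clear what ``the same $\tau$'' means for the perturbed process, since $\tau$ is adapted to a filtration that contains $\theta$. The paper sidesteps both issues by conditioning on $\{\nu_1=u\}$ and writing $J$ as a deterministic integral $\int_0^\infty \lambda_i(u)e^{-\int_0^{u+t}\lambda_i(s)ds}\,E_{x,i}[\,\cdots\mid\nu_1=u\,]\,du$, so that the entire $t$-dependence sits in a continuous deterministic prefactor and no coupling of jump times is needed.

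A secondary gap: you invoke the uniform integrability of \eqref{UI3} to pass the limit in $n$ inside the expectation, but Assumption \ref{Assumption : last one} provides uniform integrability over $\tau\in{\cal S}$ \emph{for a fixed initial condition}; it does not give uniform integrability of the family $\{h(X^{x_n}_\tau,\theta^{t_n}_\tau)\}_{n}$ indexed by varying initial conditions, which is what your coupled a.s.\ limit requires. The paper avoids this by first localizing to compact sets $U_N$ (where the relevant functions are bounded along the trajectory, so dominated convergence applies to the limit in $(x,t)$), and only then letting $U_N\nearrow \overline I\times[0,\infty)$ using monotone convergence for $\Pi^{\pm}$ and the uniform integrability of $\{h(X_\tau,\theta_\tau)\}_{\tau\in{\cal S}}$ at the single limiting initial condition. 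To repair your proof you would need both a correct treatment of stopping times that charge the jump times and a localization (or a strengthened integrability hypothesis) of this kind.
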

\begin{proof}
	Firstly, by definition of a solution to a switching SDE, the function $s\to X_s^x$ is continuous. Additionally, we prove that the function $x\to X_s^x$ is $P-$almost surely continuous. To do this, we note that 
	\begin{align*}
		\vert X_s^x-X_s^{x'}\vert^2
		&\leq 3\vert x-x'\vert^2+3Ls\int_0^s\vert X_u^x-X_u^{x'}\vert^2 ds+3\left\vert\int_0^s \sigma(X_u^x,\theta_u)-\sigma(X_u^{x'},\theta_u) dW_u\right\vert^2.
	\end{align*}
	From the Doob's maximal inequality, we get that,  for all $s'>0$
	$$
	E_{t,i}\left[\sup_{0\leq s\leq s'}\left\vert\int_0^s \sigma(X_u^x,\theta_u)-\sigma(X_u^{x'},\theta_u) dW_u\right\vert^2\right]\leq 4E_{t,i}\left[\left\vert\int_0^{s'} \sigma(X_u^x,\theta_u)-\sigma(X_u^{x'},\theta_u) dW_u\right\vert^2\right].
	$$
	Therefore, by using the It\^o isometry,
	\begin{align*}
		E_{t,i}\left[\sup_{0\leq s\leq s'}\vert X_s^x-X_s^{x'}\vert^2\right]
		&\leq 3\vert x-x'\vert^2+3Ls'E_{t,i}\left[\int_0^{s'}\vert X_u^x-X_u^{x'}\vert^2 du\right]\\
		&+12LE_{t,i}\left[\int_0^{s'} \left\vert X_u^x-X_u^{x'} \right\vert^2du\right]\\
		&\leq 3\vert x-x'\vert^2+(3Ls'+12L)\int_0^{s'}E_{t,i}\left[\vert X_u^x-X_u^{x'}\vert^2 \right]du\\
		&\leq 3\vert x-x'\vert^2+(3Ls'+12L)\int_0^{s'}E_{t,i}\left[\sup_{0\leq s\leq u}\vert X_s^x-X_s^{x'}\vert^2 \right]du.
	\end{align*}
	By the Gr\"onwall's inequality, we get 
	$$E_{t,i}\left[\sup_{0\leq s\leq s'}\vert X_s^x-X_s^{x'}\vert^2\right]\leq3\vert x-x'\vert^2e^{3Ls'+12L}, $$
	which proves the first statement  after an application of Kolmogorov's continuity criterion.
	
	To show that $(x,t)\to J(x,t,i,\tau)$ is continuous, we note that
	\begin{align*}
		&J(x,t,i,\tau)=E_{x,t,i}\left[\int_0^{\tau} e^{-\rho_s}\Pi(X_s,\theta_s)ds-e^{-\rho_\tau}h(X_\tau,\theta_\tau){1_{\{\tau< \infty\}}}\right]\\
		&=E_{x,t,i}\Bigg[\int_0^{\tau\wedge\nu_1} e^{-\rho_s}\Pi(X_s,i)ds+\int_{\tau\wedge\nu_1}^\tau e^{-\rho_s}\Pi(X_s,\theta_s)ds\\
		&~~~~~~~~~~~~~~~~~~~~~~~~~~~~~~~~~~~~~~~~~~-{ e^{-\rho_\tau}}\left(h(X_\tau,i)1_{\{\tau<\nu_1\}}+h(X_\tau,\theta_\tau)1_{\{\nu_1\leq\tau<\infty\}}\right)\Bigg]\\
		&= { \int_{0}^{\infty} \lambda_i(u)e^{-\int_0^{u+t}\lambda_i(s)ds}} E_{x,i}\Bigg[\Bigg(\int_0^{\tau\wedge u} e^{-\rho_s}\Pi(X_s,i)ds+\int_{\tau\wedge u}^\tau e^{-\rho_s}\Pi(X_s,\theta_s)ds\\
		&~~~~~~~~~~~~~~~~~~~~~~~~~~~~~~~~~~~~~~~~~-{ e^{-\rho_\tau}} \left(h(X_\tau,i)1_{\{\tau<u\}}+h(X_\tau,\theta_\tau)1_{\{u\leq\tau<\infty\}}\right)\Bigg)\Big\vert\nu_1=u\Bigg] {du},
	\end{align*}
	where the last equality follows in light of Equation \eqref{probability nu1} and Fubini's Theorem.

	Let $U_N\subset I\times [0,\infty)$ be a compact set, such that $U_N\nearrow \overline I\times [0,\infty)$, and fix $\omega\in\Omega$. Due to the continuity of $(s,x)\to X_s^x(\omega)$, the functions $(s,x)\to r(X^x_s(\omega),i)$, $(s,x)\to \Pi(X^x_s(\omega),i)$ and $(s,x)\to h(X^x_s(\omega),i)$ have maximum and minimum {on the set} $U_N$, namely
	\begin{align*}
		r(X^x_s(\omega),i)&\in[\epsilon_i,\tilde{r}_N(\omega,i)],\\
		\Pi(X^x_s(\omega),i)&\in[\underaccent{\tilde}{{\Pi}}_N(\omega,i),\tilde{\Pi}_N(\omega,i)],\\
		h(X^x_s(\omega),i)&\in[\underaccent{\tilde}{{h}}_N(\omega,i),\tilde{h}_N(\omega,i)].
	\end{align*}  
	Let $(x',t',i)\in I\times[0,\infty)\times\Theta$, then, for a fixed $\tau$, it follows from the Dominated Convergence Theorem that 
	\begin{align*}
		&\lim_{(x,t)\to(x',t')}J(x,t,i,\tau)= {\int_{0}^{\infty}\lambda_i(u)e^{-\int_0^{u+t'}\lambda_i(s)ds}}
		E_{x',i}\Bigg[\Bigg(\int_0^{\tau\wedge u} e^{-\rho_s}\Pi(X_s(\omega),i)ds\\
		&~~~~~~~~~~~~~~~~~~~~~+\int_{\tau\wedge u}^\tau e^{-\rho_s}\Pi(X_s(\omega),\theta_s)ds-e^{-\rho_\tau}\Big(h(X_\tau(\omega),i)1_{\{\tau<u\}}\\
		&~~~~~~~~~~~~~~~~~~~~~+h(X_\tau(\omega),\theta_\tau)1_{\{u\leq\tau<\infty\}}\Big)\Bigg)\Big\vert\nu_1=u\Bigg]{ du}=J(x',t',i,\tau^{x',t',i}_N).
	\end{align*}
	Let $\tau^{x',t',i}$ and $\tau_{U_N}$ be ${\cal F}_s-$stopping times, assuming that $(X_0,\zeta_0,\theta_0)=(x',t',i)$. Then, if $\tau_{U_N}=\inf\{s\geq 0:(s,X_s)\notin U_N\}$, and $\tau^{x',t',i}_N=\tau^{x',t',i}\wedge\tau_{U_N}$,
	The result holds true if
	\begin{align}\label{continuity of J}
		\lim_{U_N\nearrow \overline I\times [0,\infty)}J\left(x',t',i,\tau^{x',t',i}_N\right)=J\left(x',t',i,\tau^{x',t',i}\right).
	\end{align}

	To prove Equation \eqref{continuity of J}, we fix $\tau\in{\cal S}$ and we notice that, as $U_N\nearrow I\times [0,\infty)$,
	\begin{align*}
		0\leq\int_{t}^{\tau\wedge\tau_{U_N}}e^{-\rho_s}\Pi^\pm(X_s,\theta_s)ds\nearrow \int_{t}^{\tau\wedge T^I}e^{-\rho_s}\Pi^\pm(X_s,\theta_s)ds.
	\end{align*}
	Then, by utilizing the Monotone Convergence Theorem, it follows that
	\begin{align}\label{Auxiliary limit 1}
		&\lim_{U_N\nearrow \overline I\times [0,\infty)}E_{x',t',i}\left[\int_{0}^{\tau\wedge\tau_{U_N}}e^{-\rho_s}\Pi(X_s,\theta_s)ds\right]=E_{{x',t'},i}\left[\int_{0}^{\tau\wedge T^I}e^{-\rho_s}\Pi(X_s,\theta_s)ds\right].
	\end{align}
	Furthermore, since $\{h(X_\tau,\theta_\tau)\}_{\tau\in{\cal S}}$ is a uniformly integrable family of random variables and $e^{-\rho_{\tau\wedge\tau_{U_N}}}h(X_{\tau\wedge\tau_{U_N}},\theta_{\tau\wedge\tau_{U_N}})\to e^{-\rho_{\tau\wedge T^I}}h(X_{\tau\wedge T^I},\theta_{\tau\wedge T^I})$, $P-$almost surely, then,
	\begin{align}\label{Auxiliary limit 2}
		\lim_{U_N\nearrow \overline I\times [0,\infty)}E_{x',t',i}\left[e^{-\rho_{\tau\wedge\tau_{U_N}}}h(X_{\tau\wedge\tau_{U_N}},\theta_{\tau\wedge\tau_{U_N}})\right]=E_{x',t',i}\left[e^{-\rho_{\tau\wedge T^I}}h(X_{\tau\wedge T^I},\theta_{\tau\wedge T^I})\right].
	\end{align}
	Since $\tau\in{\cal S}$ is arbitrary, Equation \eqref{continuity of J} holds true and, thus, we finish the proof. 
\end{proof}	
To prove the DPP, we will introduce the following concept:
\begin{defi}\label{epsilon optimal strategy}
	Given the initial condition $(X_0,\zeta_0,\theta_0)=(x,t,i)$, the ${{\cal F}_s}-$stopping time $\tau^{x,t,i}_\epsilon$ is an $\epsilon-$optimal strategy if
	$$
	J(x,t,i,\tau^{x,t,i}_\epsilon)\geq V^*(x,t,i)-\epsilon,\text{ for some }\epsilon\geq 0.
	$$
\end{defi}
We note that, for each $(x,t,i)\in I\times[0,\infty)\times\Theta$, {an $\epsilon-$optimal} strategy always exists in light of the definition of the value function and Assumption \ref{Assumption : last one}. 

Henceforward, we denote the lower and upper semicontinuous envelopes of a locally bounded function $\phi:I\times[0,\infty)\times\Theta\to\mathbb{R}$, with respect to the variables $x$ and $t$, by:
\begin{align*}
	\underline{\phi}(x,t,i)\equiv\liminf_{(y,s)\to(x,t)}\phi(y,s,i),\\
	\overline{\phi}(x,t,i)\equiv\limsup_{(y,s)\to(x,t)}\phi(y,s,i).
\end{align*}
\begin{prop}\label{DPP}
	Let $(x,t,i)\in I\times[0,\infty)\times\Theta$ and $\delta\in{\cal S}$ be such that $\delta<\infty$. Then
	$$
	{V}^*(x,t,i)\leq \sup_{\tau\in{\cal S}}E_{x,t,i}\left[\int_{{ 0}}^{\tau\wedge\delta} e^{-\rho_s}\Pi({X}_s,\theta_s)ds+e^{-\rho_{\tau\wedge\delta}}\left(h(X_\tau,\theta_\tau)1_{\{\tau<\delta\}}+\overline{V}^*(X_\delta,\zeta_\delta,{\theta_\delta})1_{\{\tau\geq\delta\}}\right)\right],
	$$
	and
	$$
	{V}^*(x,t,i)\geq \sup_{\tau\in{\cal S}}E_{x,t,i}\left[\int_{{ 0}}^{\tau\wedge\delta} e^{-\rho_s}\Pi({X}_s,\theta_s)ds+e^{-\rho_{\tau\wedge\delta}}\left(h(X_\tau,\theta_\tau)1_{\{\tau<\delta\}}+\underline{V}^*(X_\delta,\zeta_\delta,{\theta_\delta})1_{\{\tau\geq\delta\}}\right)\right].
	$$
\end{prop}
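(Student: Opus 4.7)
The plan is to prove the two inequalities by opposite approaches. The upper bound follows from the strong Markov property of $(X,\zeta,\theta)$ applied to an arbitrary $\tau\in\mathcal{S}$. The lower bound is the delicate half: it requires constructing a single stopping rule $\hat\tau$ that realises, up to $O(\epsilon)$, the value on the right-hand side; since an $\epsilon$-optimal continuation depends on the random starting data $(X_\delta,\zeta_\delta,\theta_\delta)$, countably many such strategies must be patched together via a Lindel\"of covering argument.

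For the upper bound, I fix $\tau\in\mathcal{S}$ and decompose $J(x,t,i,\tau)$ according to whether $\tau<\delta$ or $\tau\geq\delta$. On the event $\{\tau\geq\delta\}$, conditioning on $\mathcal{F}_\delta$ and invoking the strong Markov property of $(X,\zeta,\theta)$ identifies the post-$\delta$ contribution as $e^{-\rho_\delta}J(X_\delta,\zeta_\delta,\theta_\delta,\tau')$, where $\tau'$ is the $\mathcal{F}_\delta$-shift of $\tau$. Bounding $J(X_\delta,\zeta_\delta,\theta_\delta,\tau')\leq V^*(X_\delta,\zeta_\delta,\theta_\delta)\leq\overline{V}^*(X_\delta,\zeta_\delta,\theta_\delta)$ and taking the supremum over $\tau$ produces the first inequality.

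For the lower bound, I fix $\tau\in\mathcal{S}$ and $\epsilon>0$. At each $(y,s,i)\in I\times[0,\infty)\times\Theta$ select an $\epsilon$-optimal $\tau^\epsilon_{y,s,i}$ in the sense of Definition \ref{epsilon optimal strategy}. Using Lemma \ref{L continuity of J} together with the lower semicontinuity of $\underline{V}^*$, choose a radius $r_{y,s,i}>0$ small enough that both
\begin{align*}
J(y',s',i,\tau^\epsilon_{y,s,i}) &\geq J(y,s,i,\tau^\epsilon_{y,s,i})-\epsilon, \\
\underline{V}^*(y',s',i) &\geq \underline{V}^*(y,s,i)-\epsilon
\end{align*}
hold for every $(y',s')$ in the ball $B_{r_{y,s,i}}(y,s)$. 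Chaining these with $V^*\geq\underline{V}^*$ yields $J(y',s',i,\tau^\epsilon_{y,s,i})\geq\underline{V}^*(y',s',i)-3\epsilon$ uniformly on the ball. By separability of $I\times[0,\infty)$, extract for each $i\in\Theta$ a countable subcover $\{B^i_j\}_j$ with centres $(y^i_j,s^i_j)$ and disjointify to a Borel partition $\{A^i_j\}_j$. Define $\hat\tau$ by $\hat\tau=\tau$ on $\{\tau<\delta\}$ and, on $\{\tau\geq\delta\}\cap\{(X_\delta,\zeta_\delta)\in A^{\theta_\delta}_j\}$, by $\hat\tau=\delta+\tilde\tau^\epsilon_j$, where $\tilde\tau^\epsilon_j$ is the $\mathcal{F}_\delta$-shift of $\tau^\epsilon_{y^{\theta_\delta}_j,s^{\theta_\delta}_j,\theta_\delta}$. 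Strong Markov at $\delta$ then yields $V^*(x,t,i)\geq J(x,t,i,\hat\tau)\geq \text{RHS}-3\epsilon$, and sending $\epsilon\downarrow 0$ and taking the supremum over $\tau$ closes the argument.

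The main obstacle is the measurable patching underlying the lower bound: the natural choice of continuation depends on the random outcome $(X_\delta,\zeta_\delta,\theta_\delta)$ and is not measurable a priori. Overcoming this hinges on (i) continuity of $J$ in the initial data furnished by Lemma \ref{L continuity of J}, (ii) separability of $I\times[0,\infty)$ used to replace an uncountable family by a countable one, and (iii) lower semicontinuity of $\underline{V}^*$ exploited in the \emph{correct} direction, so that the local bound can be transferred from $\underline{V}^*(y^i_j,s^i_j,i)$ to $\underline{V}^*(y',s',i)$ itself. These three ingredients together produce the $3\epsilon$ gap that vanishes in the limit.
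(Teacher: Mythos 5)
Your upper bound is exactly the paper's argument and is fine. The lower bound, however, has a genuine gap at the point where you ``chain'' the estimates. On the ball $B_{r_{y,s,i}}(y,s)$ you establish $J(y',s',i,\tau^\epsilon_{y,s,i})\geq J(y,s,i,\tau^\epsilon_{y,s,i})-\epsilon$ and $\underline{V}^*(y',s',i)\geq \underline{V}^*(y,s,i)-\epsilon$, and claim these combine with $V^*\geq\underline{V}^*$ to give $J(y',s',i,\tau^\epsilon_{y,s,i})\geq\underline{V}^*(y',s',i)-3\epsilon$. Tracing the chain, you obtain $J(y',s',i,\tau^\epsilon_{y,s,i})\geq V^*(y,s,i)-2\epsilon\geq\underline{V}^*(y,s,i)-2\epsilon$, and to conclude you would need $\underline{V}^*(y,s,i)\geq\underline{V}^*(y',s',i)-\epsilon$, i.e.\ that $\underline{V}^*$ does not jump \emph{up} as you move off the centre of the ball. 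That is an \emph{upper} semicontinuity requirement; lower semicontinuity gives precisely the reverse inequality (the one you wrote down), which is useless for this step. A lower semicontinuous function may exceed its value at the centre by an arbitrary amount at every other point of every neighbourhood, so the transfer from $\underline{V}^*(y^i_j,s^i_j,i)$ to $\underline{V}^*(y',s',i)$ fails as stated.

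The paper avoids this by never comparing $\underline{V}^*$ at two different points. It first fixes a bounded \emph{continuous} minorant $\phi\leq\underline{V}^*$ (which exists because $\underline{V}^*\geq -h$ with $h$ continuous, together with the integrability conditions of Assumption \ref{Assumption : last one}); for continuous $\phi$ the transfer works in both directions, and the same covering and patching argument yields $V^*(x,t,i)\geq E_{x,t,i}\bigl[\cdots+e^{-\rho_\delta}\phi(X_\delta,\zeta_\delta,\theta_\delta)1_{\{\delta\leq\tau\}}\bigr]-3\epsilon$. Only at the very end does it upgrade $\phi$ to $\underline{V}^*$, by taking an increasing sequence of bounded continuous functions $\phi_n\nearrow\underline{V}^*$ (available exactly because $\underline{V}^*$ is lower semicontinuous and bounded below) and applying monotone convergence inside the expectation. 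Your covering and patching skeleton is the right one; what is missing is this intermediate continuous minorant and the final monotone-convergence step.
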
 
\begin{proof}
	The first inequality can be easily obtained, since for any $\delta\in{\cal S}$, such that $\delta<\infty$
	\begin{align*}
		\int_0^{{\tau}} e^{-\rho_s}\Pi({X}_s,\theta_s)ds&-e^{-\rho_{{\tau}}}h(X_{{\tau}},\theta_{{\tau}})1_{\{\tau<\infty\}}
		= \int_0^{\tau\wedge\delta} e^{-\rho_s}\Pi({X}_s,\theta_s)ds-e^{-\rho_{\tau}}h(X_{{\tau}},\theta_{{\tau}})
		1_{\{\tau<\delta\}}\\
		&~+e^{-\rho_\delta}\left(\int_{\delta}^{{\tau}} e^{-(\rho_{s}-\rho_\delta)}\Pi({X}_s,\theta_s)ds-e^{-(\rho_{\tau}-\rho_{\delta})}h(X_{\tau},\theta_{\tau})
		1_{\{\tau<\infty\}}\right)1_{\{\delta\leq\tau\}}.
	\end{align*}
	Due to the strong Markov property, it follows that
	\begin{align*}
		J(x,t,i,\tau)&\leq E_{x,t,i}\left[\int_{{0}}^{\tau\wedge\delta} e^{-\rho_s}\Pi({X}_s,\theta_s)ds+e^{-\rho_{\tau\wedge\delta}}\left(h(X_\tau,\theta_\tau)1_{\{\tau<\delta\}}+J(X_\delta,\zeta_\delta,{\theta_\delta},\tau)1_{\{\tau\geq\delta\}}\right)\right].
	\end{align*}
	Since $J(X_\delta,\zeta_\delta,{\theta_\delta},\tau)\leq V^*(X_\delta,\zeta_\delta,{\theta_\delta})\leq \overline V^*(X_\delta,\zeta_\delta,{\theta_\delta})$, the result follows by applying the supremum over $\tau\in\cal{S}$ to the previous inequality.
	
	To prove the second inequality, we fix $i\in\Theta$. Additionally, we note that $V^*(x,t,i)\geq-h(x,i)$, for every $(x,t)\in I\times[0,\infty)$, which is also true for $\underline V^*$ $\left(\underline V^*(x,t,i)\geq-h(x,i)\right)$, due to the continuity of the function $h(\cdot,i)$. Consequently, from Assumption \ref{Assumption : last one}, there is a bounded continuous function $\phi(\cdot,\cdot,i):I\times[0,\infty)\to\mathbb{R}$ such that $\underline V^*(x,t,i)\geq\phi(x,t,i)$.
	
	Fix $(x,t)\in I\times[0,\infty)$ and let $\tau^{x,t,i}_{\epsilon}$ be an $\epsilon-$optimal strategy, for some $\epsilon>0$, as defined in Definition \ref{epsilon optimal strategy}. Taking into account Lemma \ref{L continuity of J} and the continuity of $\phi(\cdot,\cdot,i)$, there is a sequence $\{\gamma_{(x,t,i)}\}_{(x,t)\in I\times[0,\infty)}\subset]0,\infty[$, such that, for every $(x',t')\in B_{\gamma_{(x,t,i)}}(x,t)$,
	\begin{align*}
		J(x',t',i,\tau^{x,t,i}_{\epsilon})-J(x,t,i,\tau^{x,t,i}_{\epsilon})>-\epsilon\quad\text{and}\quad \phi(x',t',i)-\phi(x,t,i)<\epsilon.
	\end{align*} 
	Naturally, $\{B_{\gamma_{(x,t,i)}}(x,t):(x,t)\in I\times[0,\infty)\}$ is an open cover of $I\times[0,\infty)$, and, therefore, in light of the Lindel\"of's Covering Theorem, there is a sequence $\{(x_j,t_j)\}_{j\in\mathbb{N}}\subset I\times[0,\infty)$, such that $\{B_{\gamma_{(x_j,t_j,i)}}(x_j,t_j)\}_{j\in\mathbb{N}}$ forms an open subcover of $I\times[0,\infty)$. Therefore, for $(x,t)\in B_{\gamma_{(x_j,t_j,i)}}(x_j,t_j)$, with $j\in\mathbb{N}$, we have
	\begin{align*}
		J(x,t,i,\tau^{x,t,i}_{\epsilon})>J(x_j,t_j,i,\tau^{x_j,t_j,i}_{\epsilon})-\epsilon\geq V^*(x_j,t_j,i)-2\epsilon\geq \phi(x_j,t_j,i)-2\epsilon>\phi(x,t,i)-3\epsilon.
	\end{align*}
	
	From the previous arguments, it is clear that  $\{A_j\}_{j\in\mathbb{N}}$, with $$A_j=B_{\gamma_{(x_j,t_j,i)}}(x_j,t_j)\setminus\bigcup_{n=1}^{j-1}B_{\gamma_{(x_n,t_n,i)}}(x_n,t_n),$$
	is also a finite subcover of $I\times[0,\infty)$, verifying $A_j\cap A_n=\emptyset$, with $j\neq n$. Consider $\delta,\tau\in{\cal S}$ with $\delta<\infty$, then one can build the strategy
	\begin{align*}
		\overline\tau=\tau 1_{\{\tau<\delta\}}+\tau_\epsilon 1_{\{\tau\geq\delta\}},\text{ with }\tau_\epsilon=\sum_{i=1}^k\left(\sum_{j=1}^n\tau_\epsilon^{x_j,t_j,i}1_{\{(X_\delta,\zeta_\delta)\in A_j\}}\right)1_{\{\theta_\delta=i\}}+\delta1_{\{(X_\delta,\zeta_\delta)\notin \cup_{j=1}^n A_j\}},
	\end{align*}
	which trivially belongs to $\cal S$. Taking into account the decomposition
	\begin{align*}
		\int_0^{{\overline\tau}} e^{-\rho_s}\Pi({X}_s,\theta_s)ds&-e^{-\rho_{{\tau_\epsilon}}}h(X_{{\overline\tau}},\theta_{{\overline\tau}})1_{\{\overline\tau<\infty\}}
		= \int_0^{\tau\wedge\delta} e^{-\rho_s}\Pi({X}_s,\theta_s)ds-e^{-\rho_{\tau}}h(X_{{\tau}},\theta_{{\tau}})
		1_{\{\tau<\delta\}}\\
		&~+e^{-\rho_\delta}\left(\int_{\delta}^{{\tau_\epsilon}}e^{-(\rho_{s}-\rho_\delta)}\Pi({X}_s,\theta_s)ds-e^{-(\rho_{\tau_\epsilon}-\rho_{\delta})}h(X_{\tau_\epsilon},\theta_{\tau_\epsilon})1_{\{\tau_\epsilon<\infty\}}
		\right)1_{\{\delta\leq\tau\}},
	\end{align*}
	we have
	\begin{align*}
		&V^*(x,t,i)\geq J(x,t,i,\overline\tau)\geq E_{x,t,i}\Bigg[ \int_0^{\tau\wedge\delta} e^{-\rho_s}\Pi({X}_s,\theta_s)ds-e^{-\rho_{\tau}}h(X_{{\tau}},\theta_{{\tau}})
		1_{\{\tau<\delta\}}\Bigg]+\\
		&~~~~~~~+E_{x,t,i}\Bigg[\left(e^{-\rho_\delta}\left(\phi(X_\delta,\zeta_\delta,\theta_\delta)-3\epsilon\right)1_{(X_\delta,\zeta_\delta)\in \cup_{j=1}^n A_j}-e^{-\rho_\delta}h(X_\delta,\theta_\delta)1_{(X_\delta,\zeta_\delta)\notin \cup_{j=1}^n A_j}\right)1_{\{\delta\leq\tau\}}\Bigg].
	\end{align*}
	Since $\phi(\cdot,\cdot,i)$ is a bounded continuous function, for every $i\in\Theta$, and $\{h(X_\delta,\theta_\delta)\}_{\{\delta\in{\cal S}\}}$ is a uniformly integrable family of random variables, from the Dominated Convergence Theorem we get
	\begin{align*}
		&V^*(x,t,i)\geq  E_{x,t,i}\Bigg[ \int_0^{\tau\wedge\delta} e^{-\rho_s}\Pi({X}_s,\theta_s)ds-e^{-\rho_{\tau}}h(X_{{\tau}},\theta_{{\tau}})
		1_{\{\tau<\delta\}}+\\
		&~~~~~~~~~~~~~~~~~~~~~~~~~~~~~~~~~~~~~~~~~~~~~~~~~~~~~~~~~~~~~~~~~~~~~~~~+e^{-\rho_\delta}\left(\phi(X_\delta,\zeta_\delta,\theta_\delta)-3\epsilon\right)1_{\{\delta\leq\tau\}}\Bigg]\\
		&~~~~~~\geq  E_{x,t,i}\Bigg[ \int_0^{\tau\wedge\delta} e^{-\rho_s}\Pi({X}_s,\theta_s)ds-e^{-\rho_{\tau}}h(X_{{\tau}},\theta_{{\tau}})
		1_{\{\tau<\delta\}}+e^{-\rho_\delta}\phi(X_\delta,\zeta_\delta,\theta_\delta)1_{\{\delta\leq\tau\}}\Bigg]-3\epsilon.
	\end{align*}
	Now, we pick a monotonically increasing sequence of bounded continuous functions $\{\phi_n\}_{n\in\mathbb{N}}$, such that $\phi_n(x,t,i)\to\underline V^*(x,t,i)$ as $n\to\infty$ (this sequence exists in light of Urysohn's Lemma) and, consequently, from the Monotones Convergence Theorem we get 
	$$
	\lim_{n\to\infty}E_{x,t,i}\Bigg[e^{-\rho_\delta}\phi_n(X_\delta,\zeta_\delta,\theta_\delta)1_{\{\delta\leq\tau\}}\Bigg]=E_{x,t,i}\Bigg[e^{-\rho_\delta}\underline V^*(X_\delta,\zeta_\delta,\theta_\delta)1_{\{\delta\leq\tau\}}\Bigg].
	$$
	As $\epsilon$ is arbitrary, we obtain the result. 
\end{proof} 

\subsection{Viscosity solutions}
Assuming that $V^*$ is sufficiently regular, it is not difficult to show that $V^*$, in the classical sense, satisfies the system of HJB equations
\begin{align}\begin{cases}
		\label{Eq HJB}
		&F_i(x,t,\{v(x,t,j):\,j\in\Theta\},\partial_tv(x,t,i),Dv(x,t,i),D^2v(x,t,i))=0\\
		&(x,t,i)\in I\times{\Theta}\times(0,\infty)
	\end{cases},
\end{align}
where 
\begin{align*}
	&F_i(x,t,\{v(x,t,j):\,j\in\Theta\},\partial_tv(x,t,i),Dv(x,t,i),D^2v(x,t,i))\equiv\\ &~~~~~~~~~~~~~~~~~~~~~~~~~~~~~~~~~~~~~~~~~~~~~~~~~\equiv\min\Big\{-(\tilde{\cal L}v)(x,t,i)-\Pi(x,i),\,v(x,t,i)+h(x,i)\Big\},
\end{align*}
and $\partial_tv$, $Dv$ and $D^2v$ are, respectively, the first derivative of $v$ in $t$, the vector of first derivatives of $v$ in $x$ and the matrix of second derivatives of $v$ in $x$. Furthermore, the following boundary condition must also be satisfied
\begin{align}
	\label{boundary condition}
	v(x,t,i)=-h(x,i),\quad \text{for all } x\in \partial I .
\end{align}
One can note that this boundary condition is trivially satisfied when $I=D$ since, in this case, $\partial I=\emptyset$. 
Throughout this section, we will prove that the value function, $V^*$, is a viscosity solution to the system of coupled HJB equations \eqref{Eq HJB} and the boundary condition \eqref{boundary condition}.

Before we state the main result of this section, we introduce the definition of viscosity solutions for systems of variational inequalities, following the work of Ishii and Koike \cite{ishii1991viscosity1} (see also Crandall, Ishii and Lions \cite{crandall1992user}). 

\begin{defi}\label{Definition Viscosity Solution}
	Consider a locally bounded function $v:I\times(0,\infty)\times\Theta\to\mathbb{R}$. Then, $v$ is a
	\begin{itemize}
		\item[(a)] viscosity subsolution to \eqref{Eq HJB} if whenever $\psi\in C^2(I\times[0,\infty))$, $i\in\Theta$ and $\overline{v}(\cdot,\cdot,i)-\psi(\cdot,\cdot)$ has a local maximum at $(x,t)\in I\times [0,\infty)$, such that $\overline{v}(x,t,i)=\psi(x,t)$, then
		\begin{align*}\label{subsolution 1}
			&{F}_{i}(x,t,\overline{v}(x,t,i),D\psi(x,t),D^2\psi(x,t);\{\overline{v}(x,t,j):\,j\in\Theta,\,j\neq i\})\leq 0.
		\end{align*}
		\item[(b)] viscosity supersolution to \eqref{Eq HJB} if whenever $\psi\in C^2(I\times[0,\infty))$, $i\in\Theta$ and $\underline{v}(\cdot,\cdot,i)-\psi(\cdot,\cdot)$ has a local minimum at $(x,t)\in I\times[0,\infty)$, such that $\underline{v}(x,t,i)=\psi(x,t)$, then
		\begin{align*}
			&{F}_{i}(x,t,\underline{v}(x,t,i),D\psi(x,t),D^2\psi(x,t);\{\underline{v}(x,t,j):\,j\in\Theta,\,j\neq i\})\geq 0.
		\end{align*}
		\item[(c)] viscosity solution to \eqref{Eq HJB} if it is simultaneously a viscosity subsolution and a viscosity supersolution to \eqref{Eq HJB}. 
	\end{itemize}
\end{defi}
We note that in Definition \ref{Definition Viscosity Solution}, without loss of generality, we can consider functions $\psi\in C^2_0(I\times[0,\infty))$, instead of $\psi\in C^2(I\times[0,\infty))$. Additionally, in the previous definition the term ``local" can be replaced by either ``strict local" or ``global".
\begin{prop}\label{V is a viscosity solution}
	Let ${V}^*$ be the value function defined as in \eqref{Optimal Stopping Problem}. Then $V^*$ is a viscosity solution to the system of equations \eqref{Eq HJB} and the boundary condition \eqref{boundary condition} is satisfied.
\end{prop}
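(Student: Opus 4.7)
The plan is to prove three things separately: the boundary condition, and the sub- and supersolution properties at interior points. All three rely on the DPP of Proposition~\ref{DPP} and the Dynkin formula recorded after Proposition~\ref{P infinitesimal generator}. The boundary condition is immediate: if $x \in \partial I$, the regularity hypothesis forces $T^I = 0$ $P$-a.s., so every $\tau \in {\cal S}$ equals $0$ and $V^*(x,t,i) = J(x,t,i,0) = -h(x,i)$ (the condition is vacuous when $I = D$).

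For the subsolution property, I would fix a test function $\psi \in C_0^2$ and, after the standard perturbation $\psi \mapsto \psi + c|{\cdot} - (x_0,t_0)|^4$, arrange that $\overline V^*(\cdot,\cdot,i) - \psi$ has a \emph{strict} local maximum at $(x_0,t_0)$ of value $0$. Since $V^* \ge -h$ and $h$ is continuous, $\overline V^* + h \ge 0$ is automatic, so the contradiction hypothesis reduces to $-\tilde{\cal L}\psi(x_0, t_0, i) - \Pi(x_0, i) \ge 3\eta > 0$, where the $Q\psi$-term is read using $\overline V^*(\cdot, 0, j)$ at $j \ne i$ per Definition~\ref{Definition Viscosity Solution}. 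Upper semicontinuity of $\overline V^*(\cdot, 0, j)$, together with continuity of $\alpha, \sigma, r, \lambda, \Pi$ and of $\psi$ and its derivatives, propagates the bound, with $2\eta$ in place of $3\eta$, to a neighbourhood $B$ of $(x_0, t_0)$. Next I pick $(x_n, t_n) \to (x_0, t_0)$ with $V^*(x_n, t_n, i) \to \overline V^*(x_0, t_0, i) = \psi(x_0, t_0)$, set $\delta_n = \tau_{B,n} \wedge \nu_1 \wedge h$ for a small deterministic $h$, and apply the upper half of the DPP. Because $\delta_n \le \nu_1$, the continuous It\^o formula for $\psi(X, \zeta)$ over $[0,\delta_n]$ yields exactly the $L\psi - r\psi$ contribution, while the expected contribution from $\{\delta_n = \nu_1\}$, expanded to first order via $P(\nu_1 \le h \mid \zeta_0 = t_n, \theta_0 = i) = \lambda_i(t_n) h + o(h)$, reproduces the missing $Q\psi$-term with the required envelope values $\overline V^*(\cdot, 0, j)$. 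Combined with the pointwise bounds $-h \le \overline V^* \le \psi$ on $B$, this yields $V^*(x_n, t_n, i) \le \psi(x_n, t_n) - \eta\, E_{x_n, t_n, i}[\delta_n]/2$ for $h$ small enough, which, as $n \to \infty$, contradicts $V^*(x_n, t_n, i) - \psi(x_n, t_n) \to 0$.

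The supersolution argument is entirely symmetric: $\underline V^* + h \ge 0$ is automatic; assuming $-\tilde{\cal L}\psi(x_0, t_0, i) - \Pi(x_0, i) \le -3\eta$ at a strict local minimum of $\underline V^*(\cdot, \cdot, i) - \psi$ with value $0$, I propagate the inequality to a neighbourhood using lower semicontinuity of $\underline V^*(\cdot, 0, j)$ and apply the lower half of the DPP with the admissible strategy $\tau = \delta_n$ (no early stopping). Combined with Dynkin and $\underline V^* \ge \psi$ on $B$, this gives $V^*(x_n, t_n, i) \ge \psi(x_n, t_n) + \eta\, E[\delta_n]/2$ for $h$ small, the contradiction. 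The hard part will be the jump term $Q\psi$, which couples the $i$-component of the operator to the merely semicontinuous envelopes $\overline V^*$ or $\underline V^*$ at the other states $j \ne i$ (evaluated at $\zeta = 0$), whereas the It\^o/Dynkin formula requires a function that is smooth in $(x, t)$ for each fixed $i$. Stopping strictly before $\nu_1$ sidesteps this by letting one apply the smooth It\^o formula directly to $\psi$ on $\{\theta \equiv i\}$, and recovering the $Q\psi$-term through the first-order expansion of the waiting-time density while carrying the $\epsilon$-slack produced by semicontinuity through the estimates is where most of the bookkeeping lives.
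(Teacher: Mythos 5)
Your boundary argument and your supersolution argument match the paper's in substance (the paper derives the supersolution inequality directly, dividing by $\sqrt{\eta_n}$ and passing to the limit, and it freezes the components $j\neq i$ at the constants $\underline{V}^*(\overline{x},\overline{t},j)$ via an auxiliary function $\Psi$ rather than expanding the waiting-time density of $\nu_1$; these are organisational rather than substantive differences). The subsolution argument, however, has a genuine gap. Negating the subsolution inequality means assuming \emph{both} $-(\tilde{\cal L}\psi)(x_0,t_0,i)-\Pi(x_0,i)>0$ \emph{and} $\overline{V}^*(x_0,t_0,i)+h(x_0,i)>0$ strictly, and you discard the second half as ``automatic.'' It is indispensable: the supremum in the upper half of the DPP (Proposition \ref{DPP}) ranges over all $\tau\in{\cal S}$, including stopping times that vanish or are arbitrarily small with positive probability, and for such $\tau$ the gain you extract from $-\tilde{\cal L}\psi-\Pi\geq 2\eta$ is only $-2\eta\,E\left[\int_0^{\tau\wedge\delta_n}e^{-\rho_s}ds\right]$, which tends to $0$ as $\tau\downarrow 0$. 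With only the non-strict bounds $-h\leq\overline{V}^*\leq\psi$ controlling the early-stopping payoff, the best uniform-in-$\tau$ upper bound on the supremum is $\psi(x_n,t_n)$ itself; your claimed estimate $V^*(x_n,t_n,i)\leq\psi(x_n,t_n)-\eta\,E_{x_n,t_n,i}[\delta_n]/2$, with $E[\delta_n]$ in place of $E[\tau\wedge\delta_n]$, does not follow, and no contradiction is reached.

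To close this you must argue as the paper does in \eqref{contradiction 1}--\eqref{contradiction 4}: from the strict obstacle inequality and the continuity of $h(\cdot,i)$ deduce $\psi+h\geq\epsilon>0$ on a neighbourhood ${\cal V}_\epsilon(\overline{x},\overline{t})$, and from the strictness of the maximum deduce $\overline{V}^*-\psi\leq\delta<0$ on $\partial{\cal V}_\epsilon(\overline{x},\overline{t})$. Then stopping before the exit time costs at least $\epsilon$ and continuing to the exit costs at least $-\delta$, which produces the penalty $\min(\epsilon,-\delta)E\left[e^{-\rho_{\tau\wedge\tau_{n,\epsilon}}}\right]$, bounded away from zero uniformly over $\tau\in{\cal S}$, and this is what contradicts $V^*(x_n,t_n,i)-\psi(x_n,t_n)\to 0$. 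Your perturbation $\psi\mapsto\psi+c\vert\cdot-(x_0,t_0)\vert^4$ to make the maximum strict is exactly the right preparation for this step, but as written you never use the resulting boundary gap, nor the strict positivity of $\overline{V}^*+h$.
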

\begin{proof} In light of to Assumption \ref{Assumption : last one}, the function $V^*$ is locally bounded. Therefore, we will proceed through the viscosity supersolution and subsolution properties and the boundary condition.
	
	\textbf{Supersolution property:} To see that ${V}^*$ is a viscosity supersolution to the system of equations \eqref{Eq HJB}, we fix $i\in\Theta$, and let $(\overline{x},\overline{t})\in I\times[0,\infty)$ and $\psi\in C^2_0(I\times[0,\infty))$ be such that $(\overline{x},\overline{t})$ is a local minimizer of $\underline{V}^*(\cdot,\cdot,i)-\psi(\cdot,\cdot)$ and $\underline{V}^*(\overline{x},\overline{t},i)-\psi(\overline{x},\overline{t})= 0$.
	
	Fix a sufficiently small $\epsilon>0$ and let $B_\epsilon(\overline{x},\overline{t})$ be a ball centered in $(\overline{x},\overline{t})$ with radius $\epsilon$. Furthermore, let $\{(x_n,t_n)\}_{n\in\mathbb{N}}\subset B_\epsilon(\overline{x},\overline{t})$ be such that 
	$$
	\left(x_n,t_n,\underline V^*(x_n,t_n,i)\right)\to\left(\overline{x},\overline{t},{V}^*(\overline{x},\overline{t},i)\right),
	$$
	as $n\to\infty$. Naturally, such a sequence exists in light of the definition of $\underline{V}^*$. Throughout the proof, we are interested in the process $(X_s^n,\zeta^n_s,\theta_s)$ which represents $(X_s,\zeta_s,\theta_s)$ when $(X_0,\zeta_0,\theta_0)=(x_n,t_n,i)$. Whenever there is no risk of misunderstanding, we will simple write $(X_s,\zeta_s,\theta_s)$. 
	
	If $\{\eta_n\}_{n\in\mathbb{N}}$ is such that $\eta_n\to 0$ and $$\tau_{n,\epsilon}\equiv\inf\{s>0\,:\vert X_s^n-x_n\vert\geq\epsilon,\,\zeta^n_s\geq t_n+\sqrt{\eta_n}\}\wedge\inf\{s>0\,:\theta_s-\theta_{s^-}\neq 0\},$$
	for some $\epsilon>0$, then, it follows from the DPP that 
	\begin{align*}
		0\geq  E_{x_n,t_n,i}\left[\int_{0}^{\tau_{n,\epsilon}} e^{-\rho_s}\Pi({X}_s,\theta_s)ds+e^{-\rho_{\tau_{n,\epsilon}}}\underline{V}^*({X}_{\tau_{n,\epsilon}},\zeta_{\tau_{n,\epsilon}},\theta_{\tau_{n,\epsilon}})\right]-V^*(x_n.t_n,i).
	\end{align*}
	Now, consider the auxiliary function $\Psi$ given by
	\begin{equation*}
		\Psi(x,t,j)=\begin{cases}
			\psi(x,t),& \text{if }j=i\\
			\underline{V}^*(\overline x,\overline t,j),& \text{if }j\neq i
		\end{cases}.
	\end{equation*}
	From the Dynkin's formula we get that
	\begin{align*}\nonumber
		E_{x_n,t_n,i}[e^{-\rho_{\tau_{n,\epsilon}}}\Psi(X_{\tau_{n,\epsilon}},\zeta_{\tau_{n,\epsilon}},\theta_{\tau_{n,\epsilon}})]&=E_{x_n,t_n,i}[e^{-\rho_{\tau_{n,\epsilon}}}\Psi(X_{\tau_{n,\epsilon}},\zeta_{\tau_{n,\epsilon}},i)]\\
		&=\psi(x_n,t_n)+E_{x_n,t_n,i}\left[\int_{0}^{\tau_{n,\epsilon}} e^{-\rho_{s}}(\tilde{\cal L}\Psi)(X_s,\zeta_s,i)ds\right],
	\end{align*}
	where
	\begin{align*}
		(\tilde{\cal L}\Psi)(\overline{x},\overline{t},i)&=\frac{\partial\phi}{\partial t}(\overline{x},\overline{t})-r(\overline{x},i)\underline{V}^*(\overline{x},\overline{t},i)+\alpha(\overline{x},i)\cdot D{\phi}(\overline{x},\overline{t})+\frac{1}{2}Tr\left[\sigma\sigma^T(\overline{x},i)D^2{\phi}(\overline{x},\overline{t})\right]\\ &+\sum_{j\neq i}\lambda_{i,j}(\overline{t})\left(\underline{V}^*(\overline{x},\overline{t},j)-\underline{V}^*(\overline{x},\overline{t},i)\right).
	\end{align*}
	Since there are $\epsilon>0$ such that $\underline{V}^*({x},{t},i)\geq \psi(x,t)=\Psi(x,t,i)$, in $B_\epsilon(\overline{x},\overline{t})$, we can choose $n,\epsilon$ such that
	\begin{align*}
		V^*(x_n,t_n,i)&\geq  E_{x_n,t_n,i}\left[\int_0^{\tau_{n,\epsilon}} e^{-\rho_s}\Pi({X}_s,\theta_s)ds+e^{-\rho_{\tilde\tau_{n,\epsilon}}}\Psi({X}_{\tau_{n,\epsilon}},\zeta_{\tau_{n,\epsilon}},\theta_{\tau_{n,\epsilon}})\right].
	\end{align*} 
	Therefore, fixing $\eta_n\equiv V^*(x_n,t_n,i)-\Psi(x_n,t_n,i)\to 0$ as $n\to\infty$, we obtain
	\begin{align*}
		\eta_n&\geq  E_{x_n,t_n,i}\left[\int_0^{\tau_{n,\epsilon,h}} e^{-\rho_s}\Pi({X}_s,\theta_s)ds+e^{-\rho_{\tilde\tau_{n,\epsilon}}}\Psi({X}_{\tau_{n,\epsilon,h}},\zeta_{\tau_{n,\epsilon,h}},\theta_{\tau_{n,\epsilon,h}})\right]-\Psi(x_n,t_n,i)\\
		&= E_{x_n,t_n,i}\left[\int_0^{\tau_{n,\epsilon,h}} e^{-\rho_s}\left(\Pi({X}_s,i)+(\tilde{\cal L}\Psi)(X_s,\zeta_s,i)\right)ds\right].
	\end{align*} 
	Assuming, without loss of generality, that $\eta_n\to 0$ but $\eta_n\neq 0$, we have 
	\begin{align*}
		\sqrt{\eta_n}&\geq E_{x_n,t_n,i}\left[\frac{1}{\sqrt{\eta_n}}\int_0^{\tau_{n,\epsilon}} e^{-\rho_s}\left(\Pi({X}_s,i)+(\tilde{\cal L}\Psi)(X_s,\zeta_s,i)\right)ds\right].
	\end{align*}
	By letting $n\to \infty$,  we obtain
	\begin{align*}
		0\geq -(\tilde{\cal L}v)(\overline x,\overline t,i)-\Pi(\overline x,i),\quad\text{for }i\in\Theta.
	\end{align*}
	To finish this part of the proof, we note that $V^*(\overline x,\overline t,i)\geq J(\overline x,\overline t,i,0)= -h(\overline x,i)\Rightarrow \underline{V}^*(\overline x,\overline t,i)\geq -h(\overline x,i)$, because $h$ is continuous.
	
	\textbf{Subsolution property:} To prove that $V^*$ is a viscosity subsolution to the system of HJB equations \eqref{Eq HJB}, we argue by contradiction.
	
	Fix $i\in\Theta$, and let $(\overline{x},\overline{t})\in I\times[0,\infty)$ and $\psi\in C^2_0(I\times[0,\infty))$ be such that $(\overline{x},\overline{t})$ is a {strict maximizer} of $\overline{V}^*(\cdot,\cdot,i)-\psi(\cdot,\cdot)$ and $\overline{V}^*(\overline{x},\overline{t},i)-\psi(\overline{x},\overline{t})= 0$.  To get a contradiction, we also assume that
	\begin{equation}
		F_i(\overline{x},\overline{t},\{\overline{V}^*(\overline{x},\overline{t},j):\,j\in\Theta\},\partial_t\psi(\overline{x},\overline{t},i),D\psi(\overline{x},\overline{t},i),D^2\psi(\overline{x},\overline{t},i))>0.
	\end{equation}
	Taking into account that $h(\cdot,i)$ is continuous, for all $i\in{\Theta}$, there is $\epsilon>0$ such that 
	\begin{equation}\label{contradiction 1}
		\psi(\overline{x},\overline{t})+h(\overline{x},i)\geq\epsilon\quad \text{and} \quad -(\tilde{\cal L}\psi)(\overline{x},\overline{t},i)-\Pi(\overline{x},i)\geq 0,\quad\text{for all }(\overline{x},\overline{t})\in {\cal V}_\epsilon(\overline{x},\overline{t}), 
	\end{equation}
	where, ${\cal V}_{\epsilon}(\overline{x},\overline{t})$ is a neighborhood of $(\overline{x},\overline{t})$ of the form ${\cal V}_{\epsilon}(\overline{x},\overline{t})=B_{\epsilon}(\overline{x})\times[\overline{t},\overline{t}+\epsilon[$, and $B_{\epsilon}$ is a ball centered in $\overline{x}$ with radius $\epsilon$.  Additionally, since $(\overline{x},\overline{t})$ is a strict maximizer, there is $\delta<0$ such that 
	\begin{equation}\label{contradiction 2}
		\max_{(x,t)\in\partial {\cal V}_\epsilon(\overline{x},\overline{t})}\left(\overline{V}^*(x,t,i)-\psi(x,t)\right)=\delta.
	\end{equation}
	In light of the definition of $\overline{V}^*$, there is $\{(x_n,t_n)\}_{n\in\mathbb{N}}\subset I\times[0,\infty)$, such that
	$$
	\left(x_n,t_n,\overline V^*(x_n,t_n,i)\right)\to\left(\overline{x},\overline{t},{V}^*(\overline{x},\overline{t},i)\right).
	$$  
	Now, we define the stopping time $\tau_{n,\epsilon}\equiv\inf\{s\geq 0\,: (X_s^n,\zeta_s^n)\notin{\cal V}_\epsilon(\overline{x},\overline{t})\}$ and the function 
	$$
	\Psi(x,t,j)=\begin{cases}
	\psi(x,t),&j= i\\
	\overline V^*(\overline x,\overline t,j),&j\neq i
	\end{cases}.
	$$ 
	If $\eta_n=V^*(\overline{x},\overline{t},i)-\psi(\overline{x},\overline{t})$, from the Dynkin's formula, it follows that
	\begin{align}\label{contradiction 3}
		V^*(\overline{x},\overline{t},i)&=\eta_n+\psi(\overline{x},\overline{t})=\eta_n+ \Psi(\overline{x},\overline{t},i)\\\nonumber
		& =\eta_n+ E\left[e^{-\rho_{\tau\wedge\tau_{n,\epsilon}}}\Psi(X_{\tau\wedge\tau_{n,\epsilon}},\zeta_{\tau\wedge\tau_{n,\epsilon}},\theta_{\tau\wedge\tau_{n,\epsilon}})-\int_0^{\tau\wedge\tau_{n,\epsilon}}e^{-\rho_s}(\tilde{\cal L}\Psi)(X_s,\zeta_s,\theta_s)ds\right]\\\nonumber
		&\geq \eta_n+ E\left[e^{-\rho_{\tau\wedge\tau_{n,\epsilon}}}\Psi(X_{\tau\wedge\tau_{n,\epsilon}},\zeta_{\tau\wedge\tau_{n,\epsilon}},\theta_{\tau\wedge\tau_{n,\epsilon}})+\int_0^{\tau\wedge\tau_{n,\epsilon}}e^{-\rho_s}\Pi(X_s,\theta_s)ds\right].
	\end{align}  
	The inequality follows in light of the right-hand side of \eqref{contradiction 1}. 
	
	Choosing $\epsilon>0$ such that $\tau_{n,\epsilon}<\inf\{s\geq 0\,:\theta_s\neq i\}$, from the left-hand side of \eqref{contradiction 1}, we get that 
	\begin{align}\label{contradiction 4}
		&E\left[e^{-\rho_{\tau\wedge\tau_{n,\epsilon}}}\Psi(X_{\tau\wedge\tau_{n,\epsilon}},\zeta_{\tau\wedge\tau_{n,\epsilon}},\theta_{\tau\wedge\tau_{n,\epsilon}})\right]=\\\nonumber
		&=E\left[e^{-\rho_{\tau}}\psi(X_{\tau},\zeta_{\tau})1_{\{\tau<\tau_{n,\epsilon}\}}\right]+E\left[e^{-\rho_{\tau_{n,\epsilon}}}\psi(X_{\tau_{n,\epsilon}},\zeta_{\tau_{n,\epsilon}})1_{\{\tau\geq\tau_{n,\epsilon}\}}\right]\\\nonumber
		&=E\left[e^{-\rho_{\tau}}(-h(X_{\tau},\theta_{\tau})+\epsilon)1_{\{\tau<\tau_{n,\epsilon}\}}+e^{-\rho_{\tau_{n,\epsilon}}}\left(\overline V^*(X_{\tau_{n,\epsilon}},\zeta_{\tau_{n,\epsilon}},i)-\delta\right)1_{\{\tau\geq\tau_{n,\epsilon}\}}\right]\\\nonumber
		&\geq E\left[-e^{-\rho_{\tau}}h(X_{\tau},\theta_{\tau})1_{\{\tau<\tau_{n,\epsilon}\}}+e^{-\rho_{\tau_{n,\epsilon}}}\overline V^*(X_{\tau_{n,\epsilon}},\zeta_{\tau_{n,\epsilon}},i)1_{\{\tau\geq\tau_{n,\epsilon}\}}\right]+\min(\epsilon,-\delta)E\left[e^{-\rho_{\tau\wedge\tau_{n,\epsilon}}}\right].
	\end{align}
	Since $E\left[e^{-\rho_{\tau\wedge\tau_{n,\epsilon}}}\right]>0$, by combining the calculations made in \eqref{contradiction 3} and \eqref{contradiction 4}, and taking into account that $\eta_n\to 0$ as $n\to \infty$, we obtain the desired contradiction in the DPP.
	
	\textbf{Boundary condition:} To finalize the proof, we must prove that $V^*(x,t,i)=h(x,i)$, for all $x\in\partial I$. In fact, if $X_0=x\in\partial I$, then $T^I=0$ $P-$almost surely and, consequently, $V^*(x,t,i)=J(x,t,i,\tau^*\wedge 0)=h(x,i)$.
\end{proof}
Until the end of this section, we will introduce some useful auxiliary results to prove the uniqueness result in the next section. From now on, $\hat{h}:(x,t,i)\to D\times[0,\infty)\times\Theta$ is such that $\hat{h}(\cdot,\cdot,i)$ is a continuous function. 
\begin{lemma}\label{remark}
	Consider the modified optimal stopping problem 
	\begin{equation*}
		V_{\Upsilon}(x,t,i)=\sup_{\tau\in{\cal S}}E_{x,t,i}\left[\int_0^{\tau\wedge\tau_{{}_\Upsilon}}e^{-\rho_s}\Pi(X_s,\theta_s)ds-e^{-\rho_{\tau\wedge \tau_{{}_\Upsilon}}}\hat{h}(X_{\tau\wedge\tau_{{}_\Upsilon}},\zeta_{\tau\wedge\tau_{{}_\Upsilon}},\theta_{\tau\wedge\tau_{{}_\Upsilon}})1_{\{\tau\wedge \tau_{{}_\Upsilon}<\infty\}}\right],
	\end{equation*}
	where $\tau_{{}_\Upsilon}=\inf\{s\geq 0:\zeta_s\geq\Upsilon\}$ and $\Upsilon >0$ is a deterministic and  finite time. In this case, the value function $V_{\Upsilon}:(x,t,i)\to\overline{I}\times[0,\Upsilon]\times\Theta$ is a viscosity solution to
	\begin{align}\label{Auxiliary boundary problem}
		\begin{cases}
			&\min\Big\{-(\tilde{\cal L}v)(x,t,i)-\Pi(x,i),v(x,t,i)-\hat h(x,t,i)\Big\}=0\\
			&v(x,t,i)=\hat h(x,t,i),\quad \forall (x,t,i)\in (\partial I\times [0,\Upsilon[\cup I\times\{\Upsilon\})
		\end{cases}.
	\end{align}
\end{lemma}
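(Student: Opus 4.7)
The plan is to mirror the proof of Proposition \ref{V is a viscosity solution} applied to this modified problem, viewing it as a variant of \eqref{Optimal Stopping Problem} in which the admissible strategies are further cut off by the deterministic horizon $\tau_{{}_\Upsilon}$ and the terminal payoff $\hat h(x,t,i)$ now depends on the elapsed-time variable. The strong Markov property of $(X,\zeta,\theta)$ and the machinery of Section \ref{Dynamic programming principle} remain applicable, so the key arguments adapt verbatim.

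First I would establish an analogue of Proposition \ref{DPP} for $V_\Upsilon$. The original proof relies only on the strong Markov property, on the continuity of $(x,t)\mapsto J(x,t,i,\tau)$ (Lemma \ref{L continuity of J}), and on uniform integrability of the terminal family. The corresponding uniform integrability of $\{\hat h(X_{\tau\wedge\tau_{{}_\Upsilon}},\zeta_{\tau\wedge\tau_{{}_\Upsilon}},\theta_{\tau\wedge\tau_{{}_\Upsilon}})\}_{\tau\in\mathcal S}$ follows from continuity of $\hat h$, the bound $\zeta_{\tau\wedge\tau_{{}_\Upsilon}}\le\Upsilon$, and standard moment estimates on $X$ up to the bounded-in-$\zeta$ horizon; the continuity of $J_\Upsilon$ is obtained by repeating the proof of Lemma \ref{L continuity of J} after replacing the terminal payoff by the $t$-dependent $\hat h$.

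Next, for the viscosity supersolution property, fix $(\bar x,\bar t)\in I\times[0,\Upsilon)$ and $\psi\in C^2_0$ with $\underline V_\Upsilon(\cdot,\cdot,i)-\psi$ attaining a local minimum of value $0$ at $(\bar x,\bar t)$. Because $\bar t<\Upsilon$, I can pick $\epsilon>0$ so small that $B_\epsilon(\bar x,\bar t)\subset I\times[0,\Upsilon)$. Taking an approximating sequence $(x_n,t_n)\to(\bar x,\bar t)$ with $V_\Upsilon(x_n,t_n,i)\to\underline V_\Upsilon(\bar x,\bar t,i)$ and defining $\tau_{n,\epsilon}$ as the minimum of the exit time of $(X^n,\zeta^n)$ from $B_\epsilon(\bar x,\bar t)$ and the first Markov-chain transition, one has $\tau_{n,\epsilon}<\tau_{{}_\Upsilon}$ almost surely. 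The DPP applied to $\tau_{n,\epsilon}$, combined with Dynkin's formula and the auxiliary function $\Psi$ extending $\psi$ by $\underline V_\Upsilon(\bar x,\bar t,j)$ for $j\neq i$, then yields $-(\tilde{\mathcal L}\Psi)(\bar x,\bar t,i)-\Pi(\bar x,i)\ge 0$, exactly as in Proposition \ref{V is a viscosity solution}. The comparison with $\hat h$ at $(\bar x,\bar t)$ is obtained by evaluating the definition of $V_\Upsilon$ at the trivial strategy $\tau=0$. The viscosity subsolution property is then proved by the same contradiction scheme used in Proposition \ref{V is a viscosity solution}, again using $\bar t<\Upsilon$ to localize so finely that $\tau_{{}_\Upsilon}$ is never activated.

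The boundary condition is verified directly: if $x\in\partial I$ then $T^I=0$ $P$-almost surely, so every $\tau\in\mathcal S$ equals $0$ and $V_\Upsilon(x,t,i)=\hat h(x,t,i)$; if $t=\Upsilon$ then $\zeta_0=\Upsilon$ forces $\tau_{{}_\Upsilon}=0$, and the same identity is obtained. The main technical obstacle is purely bookkeeping in the supersolution and subsolution arguments: one must choose the localization so that the added forced stopping $\tau_{{}_\Upsilon}$ cannot activate inside the chosen neighbourhood, which is precisely why the interior viscosity properties are stated on $I\times[0,\Upsilon)$ and why the face $\{t=\Upsilon\}$ is treated together with $\partial I$ in the boundary condition.
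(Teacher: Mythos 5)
Your proposal follows exactly the route the paper takes: the paper's entire proof of this lemma is the single remark that it can be proven ``by using similar arguments to the ones used in Proposition \ref{V is a viscosity solution}'', and your elaboration --- adapting the DPP, localizing the supersolution and subsolution arguments inside $I\times[0,\Upsilon)$ so that $\tau_{{}_\Upsilon}$ never activates before the chosen exit time, and checking the two boundary faces $\partial I\times[0,\Upsilon[$ and $I\times\{\Upsilon\}$ directly --- is precisely the intended adaptation. The one point to watch is that $\tau_{{}_\Upsilon}$ is not a bounded stopping time (jumps of $\theta$ reset $\zeta$ to zero, so only $\zeta_{\tau\wedge\tau_{{}_\Upsilon}}\leq\Upsilon$ is controlled, not the elapsed time or the excursion of $X$), so the uniform integrability of $\{\hat h(X_{\tau\wedge\tau_{{}_\Upsilon}},\zeta_{\tau\wedge\tau_{{}_\Upsilon}},\theta_{\tau\wedge\tau_{{}_\Upsilon}})\}_{\tau\in\mathcal S}$ does not follow from continuity of $\hat h$ alone but requires an integrability hypothesis analogous to Assumption \ref{Assumption : last one} --- a hypothesis the paper leaves implicit (it is satisfied in the only place the lemma is used, where $\hat h$ is bounded on the relevant set).
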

This can be easily proven by using similar arguments to the ones used in Proposition \ref{V is a viscosity solution}.
For future reference, we note that any viscosity solution $v$ to \eqref{Auxiliary boundary problem} is such that, for $i\in\Theta$, $v(\cdot,\cdot,i)$ satisfies the boundary problem
\begin{align}\label{ODE expected value}
	-\tilde{\cal L}v(x,t,i)-\Pi(x,i)=0,&\quad \text{for all }(x,t)\in A^i_v\\\label{ODE terminal condition}
	v(x,t,i)+\hat h(x,t,i)=0,&\quad \text{for all }(x,t)\in (\overline{I}\times[0,\Upsilon])\setminus A^i_v,
\end{align}
with \begin{equation}\label{Ai}
	A^i_v=\{(x,t)\in {I}\times[0,\Upsilon[~:~v(x,t,i)>-\hat h(x,t,i)\}.
\end{equation}

\begin{lemma}\label{v > h}
	Let $v:I\times[0,\infty)\times\Theta\to\mathbb{R}$ be a viscosity solution to \eqref{Auxiliary boundary problem}. Then, $v$ is a viscosity solution to the boundary problem \eqref{ODE expected value}-\eqref{ODE terminal condition}-\eqref{Ai} and verifies $v(x,t,i)\geq-\hat h(x,t,i)$. 
\end{lemma}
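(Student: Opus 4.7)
The plan is to peel apart the variational inequality into its two natural components and read off the stated conclusions directly from the supersolution and subsolution conditions satisfied by $v$. The argument splits into (i) a pointwise obstacle inequality, (ii) the PDE on $A^i_v$, and (iii) the boundary/stopping identity. The only genuinely delicate ingredient is a test-function construction for a merely lsc envelope used in (i); the rest is essentially bookkeeping on the two arms of the minimum.

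\emph{Step (i).} Fix $(x_0,t_0,i)$. Since $\underline v(\cdot,\cdot,i)$ is lower semicontinuous, one can construct a $C^2$ test function $\psi$ with $\psi(x_0,t_0)=\underline v(x_0,t_0,i)$ and $\psi\le\underline v(\cdot,\cdot,i)$ on a neighborhood of $(x_0,t_0)$ (for instance a downward paraboloid with curvature large enough to dominate the modulus of $\underline v$ at $(x_0,t_0)$). The viscosity supersolution property of \eqref{Auxiliary boundary problem} then gives
\[
\min\bigl\{-(\tilde{\cal L}\psi)(x_0,t_0,i)-\Pi(x_0,i),\;\underline v(x_0,t_0,i)+\hat h(x_0,t_0,i)\bigr\}\ge 0,
\]
and its second argument yields $\underline v(x_0,t_0,i)\ge-\hat h(x_0,t_0,i)$; since $v\ge\underline v$ pointwise, $v\ge-\hat h$ everywhere.

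\emph{Step (ii).} The supersolution inequality above also delivers $-\tilde{\cal L}v-\Pi\ge 0$ in the viscosity sense everywhere, and in particular on $A^i_v$. For the subsolution half, pick $(x_0,t_0)\in A^i_v$; then $\overline v(x_0,t_0,i)\ge v(x_0,t_0,i)>-\hat h(x_0,t_0,i)$, so the second argument of the subsolution minimum is strictly positive. Consequently, for every $\psi\in C^2$ such that $\overline v(\cdot,\cdot,i)-\psi$ attains a local maximum at $(x_0,t_0)$ with equality, the subsolution inequality must be realised by its first argument, giving $-(\tilde{\cal L}\psi)(x_0,t_0,i)-\Pi(x_0,i)\le 0$. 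This is precisely \eqref{ODE expected value}.

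\emph{Step (iii).} On the interior portion $(I\times[0,\Upsilon[)\setminus A^i_v$, the definition \eqref{Ai} gives $v\le-\hat h$, which together with Step (i) forces equality. On the parabolic boundary $\partial I\times[0,\Upsilon]\cup I\times\{\Upsilon\}$, the identity $v=-\hat h$ is already part of the data prescribed in \eqref{Auxiliary boundary problem}. Hence \eqref{ODE terminal condition} holds on the full complement of $A^i_v$, and combined with (i)--(ii) this establishes the lemma.
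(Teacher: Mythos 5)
Your Steps (ii) and (iii) are fine and coincide with the paper's argument: on $A^i_v$ as defined in \eqref{Ai} the second arm of the subsolution minimum is strictly positive, so the first arm must close, giving \eqref{ODE expected value}; off $A^i_v$ the identity \eqref{ODE terminal condition} follows from the definition of $A^i_v$ together with the obstacle inequality and the prescribed boundary data in \eqref{Auxiliary boundary problem}. The gap is in Step (i). For a merely lower semicontinuous function it is in general impossible to construct a $C^2$ (or even Lipschitz) function touching $\underline v(\cdot,\cdot,i)$ from below at a \emph{prescribed} point $(x_0,t_0)$: if $\underline v$ behaves like $-\sqrt{\vert x-x_0\vert}$ near $(x_0,t_0)$, then any $C^1$ candidate $\psi$ with $\psi(x_0,t_0)=\underline v(x_0,t_0,i)$ satisfies $\psi(x,t)-\psi(x_0,t_0)\geq -C\vert (x,t)-(x_0,t_0)\vert$ locally, which cannot stay below $-\sqrt{\vert x-x_0\vert}$. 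Increasing the curvature of a downward paraboloid does not help, because the obstruction is a first-order cusp, not a second-order one. Hence the supersolution property cannot be invoked at an arbitrary point, and your derivation of $\underline v\geq -\hat h$ does not go through as written. (A density argument is also insufficient: lower semicontinuity bounds $\underline v(x_0,t_0,i)$ from above, not below, by the liminf along approximating test points.)

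The paper repairs exactly this step by not insisting on touching at the prescribed point: fix $(x,t)$ and a closed ball $\overline B_\epsilon(x,t)\subset I\times[0,\Upsilon[$; by lower semicontinuity and compactness, $\underline v(\cdot,\cdot,i)$ attains its minimum over $\overline B_\epsilon(x,t)$ at some $(\overline x,\overline t)$, where the constant function $\psi\equiv\underline v(\overline x,\overline t,i)$ is an admissible test function. The second arm of the supersolution condition then gives $\underline v(\overline x,\overline t,i)\geq -\hat h(\overline x,\overline t,i)$, and since $\underline v(x,t,i)\geq \underline v(\overline x,\overline t,i)$ and $\hat h(\cdot,\cdot,i)$ is continuous, letting $\epsilon\downarrow 0$ yields $\underline v(x,t,i)\geq -\hat h(x,t,i)$, hence $v\geq -\hat h$. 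Replacing your Step (i) with this argument makes the proof complete and essentially identical to the paper's.
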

\begin{proof}
	Firstly, we prove that any viscosity supersolution $v$ to \eqref{Auxiliary boundary problem} verifies  $v(x,t,i)\geq-\hat h(x,t,i)$, for all $(x,t,i)\in I\times[0,T]\times\Theta$.
	Let $B_\epsilon(x,t)$ be a ball centered in $(x,t)$ with radius $\epsilon>0$, such that $\overline B_\epsilon(x,t)\subset I\times[0,\Upsilon[$. Since the function $\underline{v}(\cdot,\cdot,i)$, with $i\in\Theta$, is lower semicontinuous, then there is $$(\overline x,\overline t)=\arg\min\{\underline v(x,t,i)\,:(x,t)\in\overline B_\epsilon(x,y)\}.$$
	Therefore, by choosing $\psi(x,t)= \underline v(\overline x, \overline t,i)$, for all $(x,t)\in I\times[0,\Upsilon[$, $(\overline x, \overline t)$ is a local minimizer of $\underline v(x,t,i)-\psi(x,t)$ and $\underline v(\overline x, \overline t,i)=\psi(\overline x, \overline t)\geq -\hat h(\overline x,\overline t, i)$. Letting $\epsilon$ go to $0$ allows us to get that $\underline v(x,t,i)\geq-\hat h(x,t,i)\Rightarrow v(x,t,i)\geq-\hat h(x,t,i)$. Furthermore, $v(x,t,i)=-\hat h(x,t,i)$, for all $(t,x)\in I\times[0,\Upsilon]\setminus A_v^i$ and $i\in\Theta$.
	
	To finish this proof, we note that, if $v(x,t,i)> -\hat h(x,t,i)$, then $\overline v(x,t,i)> -\hat h(x,t,i)$. Therefore, since $v$ is a viscosity solution to \eqref{Auxiliary boundary problem}, then $v$ is a viscosity solution to Equation \eqref{ODE expected value}. 
\end{proof}

To introduce the next result, we start by defining the expected value $H(x,t,i)$ in the following way:
\begin{align}\label{expected value H}
	\begin{cases}&H(x,t,i)\equiv E_{x,t,i}\left[\int_0^{\tau_A}e^{-\rho_s}\Pi(X_s,\theta_s)ds-e^{-\rho_{\tau_A}}\hat{h}(X_{\tau_A},\zeta_{\tau_A},\theta_{\tau_A})1_{\{\tau_A<\infty\}}\right]\\
		&\tau_A=\inf\{s\geq 0:(X_s,\zeta_s,\theta_s)\notin A\}, \quad \text{and}\\
		&A=\cup_{i\in\Theta}A_i\times \{i\} \quad \text{with}\quad A_i\subset I\times[0,\Upsilon]\text{ an open set}.
	\end{cases}
\end{align}
In the next lemma, we characterize the expected value $H$ as a solution to the  boundary problem \eqref{expected value H}.
\begin{lemma}\label{Corollary existence}
	Let
	$H:I\times[0,\infty)\times\Theta\to\mathbb{R}$ be the function defined as in \eqref{expected value H}. Then $H$ is a viscosity solution to the boundary problem \eqref{ODE expected value}-\eqref{ODE terminal condition}, replacing $A_v^i$ by $A_i$ as in \eqref{expected value H}.
\end{lemma}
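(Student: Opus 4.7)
My plan is to mimic the sub- and super-solution arguments from Proposition \ref{V is a viscosity solution}, replacing the DPP inequalities by the \emph{equality} that comes from the strong Markov property of the expectation defining $H$ (no supremum is involved). The proof decomposes into verifying the boundary condition and then deriving the viscosity PDE on $A$. For the boundary condition, if $(x,t,i)\in(\overline I\times[0,\Upsilon]\times\Theta)\setminus A$, then starting from $(x,t,i)$ one has $\tau_A=0$ $P$-almost surely, so $H(x,t,i)=-\hat h(x,t,i)$ in the classical (hence viscosity) sense, which is exactly \eqref{ODE terminal condition}. To obtain the flow identity that plays the role of Proposition \ref{DPP}, I would use the strong Markov property and the fact that $\tau_A$ is a hitting time of a fixed set to write, for $(x,t,i)\in A$ and any $\mathcal F_s$-stopping time $\delta\leq\tau_A$,
\[
H(x,t,i)=E_{x,t,i}\!\left[\int_0^\delta e^{-\rho_s}\Pi(X_s,\theta_s)\,ds+e^{-\rho_\delta}H(X_\delta,\zeta_\delta,\theta_\delta)\right],
\]
which, crucially, holds with equality in both directions.

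For the viscosity equation on $A_i$ I would then transcribe the sub-/super-solution steps from Proposition \ref{V is a viscosity solution}. Fix $i\in\Theta$, $(\overline x,\overline t)\in A_i$, and a test function $\psi\in C^2_0(I\times[0,\infty))$. Consider first the subsolution case: assume $(\overline x,\overline t)$ is a local maximizer of $\overline H(\cdot,\cdot,i)-\psi$ with $\overline H(\overline x,\overline t,i)=\psi(\overline x,\overline t)$. I would pick a sequence $(x_n,t_n)\to(\overline x,\overline t)$ with $H(x_n,t_n,i)\to\overline H(\overline x,\overline t,i)$, set $\eta_n:=H(x_n,t_n,i)-\psi(x_n,t_n)\to 0$, and introduce
\[
\tau_{n,\epsilon}=\inf\{s>0:(X_s,\zeta_s)\notin B_\epsilon(\overline x,\overline t)\}\wedge\inf\{s>0:\theta_s\neq\theta_{s^-}\},
\]
with $\epsilon$ small enough that $B_\epsilon(\overline x,\overline t)\subset A_i$ and $\overline H(\cdot,\cdot,i)\leq\psi$ there. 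This forces $\tau_{n,\epsilon}\leq\tau_A$ and $\theta_s\equiv i$ on $[0,\tau_{n,\epsilon}]$. Defining the auxiliary $\Psi$ exactly as in Proposition \ref{V is a viscosity solution}, the flow identity combined with Dynkin's formula applied to $\Psi$, the pointwise inequality $\overline H\leq\psi$ in the neighborhood, and division by $\sqrt{\eta_n}$ followed by $n\to\infty$ and $\epsilon\to 0$ will yield $-(\tilde{\mathcal L}\psi)(\overline x,\overline t,i)-\Pi(\overline x,i)\leq 0$. The supersolution case is entirely symmetric; using $\underline H\geq\psi$ near $(\overline x,\overline t)$ in the same flow identity produces the reverse inequality. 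Combining both gives the viscosity equation \eqref{ODE expected value} at every point of $A_i$.

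I expect the main obstacle to be controlling the nonlocal (jump) contribution of $\tilde{\mathcal L}\Psi$ in the passage to the limit, so that its value at $(\overline x,\overline t,i)$ matches the form of the operator appearing in Definition \ref{Definition Viscosity Solution}. This is handled by forcing $\tau_{n,\epsilon}$ to occur strictly before the first jump of $\theta$ and by exploiting the continuity of the rates $\lambda_{i,j}$ and of $\hat h$: the integrand in the Dynkin expansion becomes continuous at $(\overline x,\overline t,i)$, so the dominated convergence argument used in Proposition \ref{V is a viscosity solution} carries over verbatim.
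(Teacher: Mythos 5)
Your proposal is correct and follows essentially the same route as the paper: the terminal condition holds by construction of $\tau_A$, the strong Markov property gives the flow identity (with equality) for stopping times dominated by $\tau_A$, and the viscosity property on $A_i$ is obtained by repeating the sub-/super-solution arguments of Proposition \ref{V is a viscosity solution} with the DPP inequalities replaced by that equality. The paper's own proof is just a terser statement of exactly this plan, so your additional detail (localizing before the first jump of $\theta$ to control the nonlocal term) is a faithful expansion rather than a departure.
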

\begin{proof}
	First of all, we note that, in light of Assumption \eqref{Assumption : last one}, $H$ is locally bounded. Fixing $i\in\Theta$, by construction,
	\begin{align*}
		H(x,t,i)&=-h(x,i),\quad \text{for all }(x,t)\notin A^i.
	\end{align*}
	If $(x,t)\in A^i$ and $\tau\in{\cal S}$ is such that $\tau<\tau_A$, then
	\begin{equation*}
		H(x,t,i)=E_{x,t,i}\left[\int_0^{\tau}e^{-\rho_s}\Pi(X_s,\theta_s)ds+e^{-\rho_\tau}H(X_{\tau},\zeta_\tau,\theta_{\tau})\right].
	\end{equation*}
	Therefore, to finish the proof, one needs to demonstrate that $H$ is a viscosity solution to  \eqref{ODE expected value}, which is straightforward in light of the arguments used to prove Proposition \ref{V is a viscosity solution}.
\end{proof}

\subsection{The uniqueness result}

At this generality, the uniqueness of solutions to the system of HJB equations cannot be guaranteed without further conditions (see Example \ref{Example}). Therefore, in this section, we present some additional conditions that guarantee the uniqueness of the viscosity solution to \eqref{Eq HJB}. Under these conditions, such a solution will be $V^*$.

The next example was inspired by Example 3.1 in {\O}ksendal and Reikvam \cite{oksendal1998stochastic}.
\begin{example}\label{Example}
	Consider the following system of HJB equations:
	\begin{align}\label{HJB0 example}
		&\min\left\{-\frac{1}{2}\sigma^2v''(x,0),v(x,0)-\frac{x^2}{1+x^2}\right\}=0,\\
		&\min\left\{-\frac{1}{2}\sigma^2v''(x,1)-\lambda(v(x,0)-v(x,1)),v(x,1)-1\right\}=0, \quad \text{for all } x\in \mathbb{R}.\label{HJB1 example}
	\end{align}
	It is straightforward to see that any constant function $v(x,i)=a$ with $a\geq 1$ is a classical solution (and, consequently, a viscosity solution) to \eqref{HJB0 example}-\eqref{HJB1 example}. 
\end{example}

Uniqueness of viscosity solutions is generally guaranteed through a suitable comparison principle, which, in our case, exists if conditions C.1 and C.2 are satisfied (see Ishii and Koike \cite{ishii1991viscosity1}).
Henceforward, $\mathbb{S}_n$ denotes the set of all symmetric matrices of dimension $n$.
\begin{itemize}
	\item[C.1]There is a number $b>0$ such that if $m=(m_1,\ldots,m_k)$ and $n=(n_1,\ldots,n_k)\in \mathbb{R}^{k}$, $\max\limits_{u\in\Theta}(m_u-n_u)>0$ and $(x,t,p,a)
	\in I\times[0,\infty)\times\mathbb{R}^{n}\times\mathbb{R}$, then there is a $j=j(m,n,x,t,p,a)\in\Theta$ such that
	$$
	(m_j-n_j)=\max\limits_{u\in\Theta}(m_u-n_u),
	$$
	and, for all $X\in\mathbb{S}_n$,
	\begin{align*}
		{F}_j(x,t,\{m_i:i\in\Theta\},a,p,X)&-{F}_j(x,t,\{n_i:i\in\Theta\},a,p,X)\geq b(m_j-n_j).
	\end{align*}
	\item[C.2]There is a continuous function $\omega:[0,\infty)\to [0,\infty)$ with $w(0)=0$ such that if $X,Y\in {\mathbb{S}}_n$, $b>1$ and 
	\begin{equation*}
		-3b
		\left( {\begin{array}{cc}
				Id & 0 \\
				0 & Id \\
		\end{array} } \right)\leq \left( {\begin{array}{cc}
				X & 0 \\
				0 & Y \\
		\end{array} } \right)\leq 3b
		\left( {\begin{array}{cc}
				Id & -Id \\
				-Id & Id \\
		\end{array} } \right)
	\end{equation*}
	then, for all $j\in\Theta$,  $(x,t)$, $(y,s)\in I\times[0,\infty)$, and $m\in\mathbb{R}^{k}$
	\begin{align*}
		{F}_j(y,s&,\{m_i:i\in\Theta\},b(t-s),b(x-y),-Y)-\\
		&~~~~~~~~~~~~~~~~~-{F}_j(x,t,\{m_i:i\in\Theta\},b(t-s),b(x-y),X)\leq\omega\left(a\vert x-y\vert^2+\frac{1}{a}\right).
	\end{align*}
\end{itemize}
In the next lemma, we prove that conditions C.1 and C.2 are satisfied.
\begin{lemma}\label{Lemma conditions unique solution}
	Consider the system of HJB equations given by \eqref{Eq HJB} and assume that Assumption \ref{A functions} holds true. 
	Then, conditions C.1 and C.2 are satisfied in any compact set $U\subset I\times[0,\infty)$.
\end{lemma}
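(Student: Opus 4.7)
The plan is to verify C.1 and C.2 separately, using the explicit form of the operator. For a generic vector $m=(m_1,\ldots,m_k)$ the nonlinearity reads
$$F_j(x,t,m,a,p,X) = \min\{A_j(x,t,m,a,p,X),\; B_j(x,m)\},$$
where $A_j = -a - \alpha(x,j)\cdot p - \tfrac12\mathrm{Tr}[\sigma\sigma^T(x,j)X] + \bigl(r(x,j)+\sum_{i\neq j}\lambda_{j,i}(t)\bigr)m_j - \sum_{i\neq j}\lambda_{j,i}(t)m_i - \Pi(x,j)$ and $B_j = m_j + h(x,j)$. Both arguments of the minimum are affine in $m$, so monotonicity and continuity reduce to inspection of coefficients, and throughout the proof I would use the elementary bounds $\min(A,B)-\min(C,D)\geq \min(A-C,B-D)$ and $\min(A,B)-\min(C,D)\leq\max(A-C,B-D)$.

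\textbf{Condition C.1.} Fix $m,n$ with $\max_u(m_u-n_u)>0$ and let $j$ be an index attaining the maximum, so that $m_j-n_j\geq m_i-n_i$ for every $i$. Since $\lambda_{j,i}(t)\geq 0$,
$$A_j(m)-A_j(n) = r(x,j)(m_j-n_j) + \sum_{i\neq j}\lambda_{j,i}(t)\bigl[(m_j-n_j)-(m_i-n_i)\bigr]\geq \epsilon_j(m_j-n_j),$$
where the last inequality uses Assumption~\ref{A functions}. Also $B_j(m)-B_j(n)=m_j-n_j$. The $\min$-inequality then gives $F_j(m)-F_j(n)\geq b(m_j-n_j)$ with $b:=\min_{j\in\Theta}\min(\epsilon_j,1)>0$, which is well-defined because $\Theta$ is finite (and $r$ is strictly positive, uniformly on the compact $U$).

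\textbf{Condition C.2.} Apply the dual $\min$-inequality. The trace piece is handled by the standard Crandall--Ishii bound: applying the matrix inequality column-by-column to $A:=\sigma(x,j)$ and $B:=\sigma(y,j)$ yields $\tfrac12\mathrm{Tr}[\sigma\sigma^T(y,j)Y+\sigma\sigma^T(x,j)X]\leq \tfrac{3}{2}b\|\sigma(x,j)-\sigma(y,j)\|^2\leq \tfrac{3}{2}bL^2|x-y|^2$ by Assumption~\ref{Assumption 1}. The drift gap is $(\alpha(x,j)-\alpha(y,j))\cdot b(x-y)\leq bL|x-y|^2$ by the same Lipschitz bound. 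The remaining differences $\Pi(x,j)-\Pi(y,j)$ and $h(y,j)-h(x,j)$ are absorbed into moduli of continuity since $\Pi(\cdot,j),h(\cdot,j)$ are uniformly continuous on $U$.

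\textbf{Main obstacle.} The delicate point is the $m$-dependent residue $(r(y,j)-r(x,j))m_j + \sum_{i\neq j}(\lambda_{j,i}(s)-\lambda_{j,i}(t))(m_j-m_i)$, which is not intrinsically small in either $|x-y|^2$ or $1/b$. The resolution is that C.2 is only needed on compact subsets (the condition is stated on $U\subset I\times[0,\infty)$, and in the comparison argument it is applied to locally bounded viscosity solutions, so $m$ is restricted to some bounded range $|m|\leq M$). The uniform continuity of $r(\cdot,j)$ and $\lambda_{j,i}(\cdot)$ on $U$ then yields moduli $\omega_r(|x-y|)M$ and $\omega_\lambda(|t-s|)\cdot 2M$ which vanish as $|x-y|+|t-s|\to 0$. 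Collecting every contribution into a single continuous modulus $\omega$ with $\omega(0)=0$ completes the verification.
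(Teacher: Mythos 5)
Your proof is correct and follows essentially the same route as the paper's: split $F_j$ into its two affine-in-$m$ branches, use $r(\cdot,j)\geq\epsilon_j$ and $\lambda_{j,i}\geq 0$ together with the choice of the maximizing index for C.1, and use the Lipschitz bounds on $\alpha,\sigma$ (with the Crandall--Ishii Example 3.6 trace estimate) plus uniform continuity of $\Pi,r,\lambda_{j,i},h$ on $U$ for C.2. The one place you go beyond the paper is in explicitly flagging and resolving the $m$-dependence of the residue $(r(y,j)-r(x,j))m_j+\sum_{i\neq j}(\lambda_{j,i}(s)-\lambda_{j,i}(t))(m_j-m_i)$ via the local boundedness of the solutions being compared; the paper silently absorbs this into ``uniform continuity,'' so your remark is a genuine (and welcome) tightening rather than a departure.
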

\begin{proof}To prove that condition C.1 is verified, we start by introducing the following notation:
	\begin{align*}\nonumber
		G_j(x,t,\{m_i:i\in\Theta\},a,p,X)&=r(x,j)m_j-a-\alpha(x,j)\cdot p-\frac{1}{2}Tr[\sigma\sigma^T(x,j)X]\\
		&-\sum_{i\neq j}\lambda_{j,i}(t)\left(m_{i}-m_j\right) -\Pi(x,j)\quad\text{for } i\neq j\in\Theta.
	\end{align*}
	Now, we assume that there is $j$ such that $$0<(m_j-n_j)=\max\limits_{u\in\Theta}(m_u-n_u).$$ Therefore,
	\begin{align*}
		{G}_j(x,t,\{m_i:i\in\Theta\},a,p,X)&={G}_j(x,t,\{n_i:i\in\Theta\},a,p,X)+\sum_{i\neq j}\lambda_{j,i}(t)\left(m_{j}-n_j\right)\\
		&-\sum_{i\neq j}\lambda_{j,i}(t)\left(m_{i}-n_i\right)+r(x,j)(m_j-n_j)\\
		&\geq G_j(x,t,\{n_i:i\in\Theta\},a,p,X)+r(x,j)(m_j-n_j)\\
		m_j+h(x,j)&=n_j+h(x,j)+(m_j-n_j).
	\end{align*}
	Finally, to prove that C.2 is satisfied, we notice that for $j\neq i\in\Theta$, $(x,t)$ and $(y,s)\in U$, $m\in\mathbb{R}^{k}$, and $b>1$
	\begin{align*}
		{G}_j(y,s,&\{m_i:i\in\Theta\},b(t-s),b(x-y),-Y)-{G}_j\left(x,t,\{m_i:i\in\Theta\},b(t-s),b(x-y),X\right)\\
		&=(r(y,j)-r(x,j))m_j-\sum_{i\neq j}\left(\lambda_{j,i}(s)-\lambda_{j,i}(t)\right)(m_i-m_{j})-({\Pi}(y,j)-{\Pi}(x,j))\\
		&+b\left(\alpha(x,j)-\alpha(y,j)\right)\cdot(x-y)+\frac{1}{2}Tr[\sigma\sigma^T(y,j)Y+\sigma\sigma^T(x,j)X]\\
		&\leq L\vert x-y\vert^2+\omega\left(\vert x-y\vert^2+\vert t-s\vert\right),
	\end{align*} 
	for some $L>0$. The last inequality follows in light of the uniform continuity of $\Pi(\cdot,j)$, $r(\cdot,j)$ and $\lambda_{j,i}(\cdot)$ in $U$. The calculations involving $Tr[\sigma\sigma^T(y,j)Y+\sigma\sigma^T(x,j)X]$ may be seen in Example 3.6 of Crandall, Ishii and Lions \cite{crandall1992user}. Finally, the result follows from the uniform continuity of the functions $h(\cdot,i)$ in $U$, for all $i\in\Theta$.
\end{proof}

The next result states that there is a unique solution $v$ to the boundary problem \eqref{Eq HJB}-\eqref{boundary condition}, which is the value function $V^*$. Furthermore, from the proof of Theorem \ref{Verification theorem 2}, one can observe that
\begin{align}
	V^*(x,t,i)&=\begin{cases}
		u(x,t,i),&(x,t,i)\in A_v\\
		-h(x,i),&(x,t,i)\notin A_v
	\end{cases},
\end{align}
where 
\begin{equation}\label{Av}
	A_v=\{(x,t,i)\in I\times[0,\infty)\times\Theta:u(x,t,i)>-h(x,i)\}
\end{equation}
and $u(x,t,i)$ satisfies, in the viscosity sense, the partial differential equation (PDE)
\begin{equation}\label{PDE auxiliary}
	-\tilde{\cal L}v(x,t,i)-\Pi(x,i)=0.
\end{equation}
For future reference, we introduce the stopping time
\begin{equation}\label{tauv}
	\tau_v\equiv\inf\{s\geq 0:(X_s,\zeta_s,\theta_s)\notin  A_v\}. 
\end{equation}
The result presented here follows the idea of Theorem 3.1 of {\O}ksendal and Reikvam \cite{reikvam1998viscosity}. 
\begin{teo}\label{Verification theorem 2}
	Suppose that $v$ is a viscosity solution to the system of equations \eqref{Eq HJB} and satisfies conditions \eqref{boundary condition}. Additionally, assume that 
	\begin{align}\label{UI2}
		\{v(X_\tau,\zeta_\tau,\theta_\tau)\}_{\tau\in {\cal S}}& \text{ is a uniformly integrable family of random variables.} 
	\end{align}
	Then, $v$ is the unique solution to \eqref{Eq HJB}-\eqref{boundary condition} that satisfies \eqref{UI2} and verifies $v=V^*$. Furthermore, $\tau^*=\tau_v$.
\end{teo}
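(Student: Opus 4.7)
The plan is to establish $v=V^*$ via two complementary inequalities; uniqueness and the identification $\tau^*=\tau_v$ then follow as immediate corollaries.

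\textbf{Upper bound} ($v\geq V^*$). The supersolution inequality $\min\{-\tilde{\mathcal L}v-\Pi,\,v+h\}\geq 0$ gives $v(\cdot,\cdot,i)\geq -h(\cdot,i)$ and $-\tilde{\mathcal L}v-\Pi\geq 0$ in the viscosity sense, for every $i\in\Theta$. To transfer the differential inequality to expectation form, I would mollify $v(\cdot,\cdot,i)$ component-wise in $(x,t)$ (e.g.\ by sup-convolution, treating $i$ as a parameter), apply Dynkin's formula \eqref{ito formula} for the semimartingale $(X,\zeta,\theta)$ to the regularized function, and pass to the limit using \eqref{UI2} together with Assumption \ref{Assumption : last one}. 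For any $\tau\in\mathcal S$ this yields
$$v(x,t,i)\ \geq\ E_{x,t,i}\!\left[\int_0^{\tau}e^{-\rho_s}\Pi(X_s,\theta_s)\,ds+e^{-\rho_\tau}v(X_\tau,\zeta_\tau,\theta_\tau)1_{\{\tau<\infty\}}\right]\ \geq\ J(x,t,i,\tau),$$
where the last step uses $v\geq -h$ on $\{\tau<T^I\}$ and the boundary condition \eqref{boundary condition} on $\{\tau=T^I\}$. Taking the supremum over $\tau\in\mathcal S$ gives $v\geq V^*$.

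\textbf{Lower bound} ($v\leq V^*$). Define $A_v$ as in \eqref{Av} and $\tau_v$ as in \eqref{tauv}. Lemma \ref{v > h}, specialized to $\hat h=h$, shows that $v$ is a viscosity solution of the linear boundary problem \eqref{ODE expected value}--\eqref{ODE terminal condition} on $A_v$ with data $-h$ on the complement. Lemma \ref{Corollary existence} identifies the functional
$$H(x,t,i)=E_{x,t,i}\!\left[\int_0^{\tau_v}e^{-\rho_s}\Pi(X_s,\theta_s)\,ds-e^{-\rho_{\tau_v}}h(X_{\tau_v},\theta_{\tau_v})1_{\{\tau_v<\infty\}}\right]=J(x,t,i,\tau_v)$$
as a viscosity solution of exactly the same linear boundary problem. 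Conditions C.1 and C.2 established in Lemma \ref{Lemma conditions unique solution} hold a fortiori for the linear equation on compact subsets of $A_v$; combining the comparison principle implied by C.1--C.2 with the uniform integrability \eqref{UI2} enjoyed by both $v$ and $H$ (for $H$ via Assumption \ref{Assumption : last one}), a standard exhaustion by relatively compact open subsets of $A_v$ forces $v\equiv H$ on $A_v$, while on the complement both reduce to $-h$. Hence $v=J(\cdot,\cdot,\cdot,\tau_v)\leq V^*$. Together with the previous step this gives $v=V^*$, and uniqueness follows because any other viscosity solution satisfying \eqref{UI2} coincides with $V^*$ by the same argument. Finally, once $v=V^*$ is known, $A_v=\{V^*>-h\}$, so the definitions of $\tau_v$ and $\tau^*$ coincide.

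\textbf{Main obstacle.} The delicate point is the viscosity Dynkin inequality in the first step: for classical $v$ it is immediate from It\^o's formula, but for a mere viscosity supersolution it requires a mollification compatible both with the jump structure at the transition times $\nu_n$ and with the coupling between regimes through the $Q$-term in $\tilde{\mathcal L}$. I expect a component-wise sup-convolution in $(x,t)$ to inherit the supersolution property at the cost of a vanishing penalty depending on the regularization parameter; the passage to the limit must then control the $Q$-term uniformly using the continuity of $\lambda_{i,j}$ and the uniform integrability \eqref{UI2}. The comparison argument in the second step is more routine given Lemma \ref{Lemma conditions unique solution}, provided the exhaustion by compact subsets of a possibly unbounded $A_v$ is handled using \eqref{UI2} to pass to the limit in the boundary term.
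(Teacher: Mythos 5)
Your lower bound is essentially the paper's argument: both identify $v$ on $A_v$ with the functional $H=J(\cdot,\cdot,\cdot,\tau_v)$ by noting (Lemma \ref{v > h} and Lemma \ref{Corollary existence}) that both solve the same linear boundary problem, and both invoke the comparison principle from Lemma \ref{Lemma conditions unique solution} on an exhaustion by bounded sets, passing to the limit with the uniform integrability hypotheses. The identification $\tau^*=\tau_v$ and the uniqueness statement are handled the same way in both texts.

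Your upper bound, however, departs from the paper and is where the proposal is incomplete. You propose to regularize $v$ and run Dynkin's formula on the mollification, but you leave the central step --- the ``viscosity Dynkin inequality'' --- as an acknowledged expectation rather than an argument. This is precisely the hard point: an inf-convolution of a supersolution (note: sup-convolution is the tool for \emph{sub}solutions, so the regularization you name is the wrong one for this half) is only semiconcave, not $C^{2,1}$, so It\^o/Dynkin does not apply directly; one needs a second smoothing or an Alexandrov-type argument, and the nonlocal coupling term $Q$ evaluates the \emph{other} components $v(\cdot,0,j)$, which are merely locally bounded, so the regularized function does not satisfy a cleanly perturbed equation componentwise. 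The paper avoids all of this: it restricts $v$ to a bounded set $A_N$, observes that this restriction and the value function of the \emph{localized} stopping problem (Lemma \ref{remark}, Proposition \ref{V is a viscosity solution}) are both viscosity solutions of the same localized obstacle problem with the same boundary data, and concludes $v_N$ equals that localized value function by the very comparison principle you already use in your lower bound; letting $N\to\infty$ and using $v\geq -h$ (Lemma \ref{v > h}) then gives $v\geq V^*$ with no regularization at all. You should either carry out the mollification argument in full --- controlling the $Q$-term and the jump structure explicitly --- or, more economically, replace your first step by the same comparison-principle identification you use in the second.
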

\begin{proof}
	To prove the result, we introduce the following: (i) 
	an open bounded set $A_N\subset I\times[0,\infty)$ such that $A_N\nearrow I\times[0,\infty)$, as $N\to\infty$,
	and (ii) the function $v_N$ that verifies $v_N(x,t,i)=v(x,t,i), \text{ for all  }(x,t,i)\in\overline{A}_N\times\Theta,$  where $v$ is a viscosity solution to \eqref{Eq HJB}, while satisfying conditions \eqref{boundary condition}. By construction, $v_N$ is a solution to \eqref{Auxiliary boundary problem} with $\hat h=v_N$, for all $(x,t,i)\in \overline{A}_N\times\Theta$. Combining Lemma \ref{Lemma conditions unique solution} with the comparison principle in Ishii and Koike \cite{ishii1991viscosity1}, this solution is unique. 
	Consequently, taking into account Proposition \ref{V is a viscosity solution} and Lemma \ref{remark}
	\begin{align*}
		v_N(x,t,i)&=\sup_{\tau\in{\cal S}}E_{x,t,i}\left[\int_{0}^{\tau\wedge\tau_{N}}e^{-\rho_s}\Pi({X}_s,\theta_s)ds+e^{-\rho_{\tau\wedge\tau_{N}}}v_N\left(X_{\tau\wedge\tau_{N}},\zeta_{\tau\wedge\tau_{N}},\theta_{\tau\wedge\tau_{N}}\right)1_{\{\tau\wedge\tau_N<\infty\}}\right],
	\end{align*}
	where $\tau_{N}\equiv\inf\{s>0:(X_s,\zeta_s)\notin A_N\}$. We note that, by construction
	\begin{equation*}
		A_N\nearrow I\times [0,\infty)\quad \text{and}\quad v(x,t,i)=\lim_{N\to\infty}v_N(x,t,i).
	\end{equation*} 
	In additionally, similarly to \eqref{Auxiliary limit 1} and \eqref{Auxiliary limit 2} we can obtain
	\begin{align*}
		\lim_{N\to\infty}E_{x,t,i}\left[\int_{0}^{\tau\wedge\tau_{N}}e^{-\rho_s}\Pi({X}_s,\theta_s)ds\right]=E_{x,t,i}\left[\int_{0}^{\tau\wedge T^I}e^{-\rho_s}\Pi({X}_s,\theta_s)ds\right]
	\end{align*}
	and
	\begin{align*}
		\lim_{N\to\infty}E_{x,i}\left[e^{-\rho_{\tau\wedge\tau_{N}}}v_N(X_{\tau\wedge\tau_{N}},\zeta_{\tau\wedge\tau_{N}},\theta_{\tau\wedge\tau_{N}})\right]=E_{x,i}\left[e^{-\rho_{\tau\wedge T^I}}v(X_{\tau},\zeta_\tau,\theta_{\tau})\right].
	\end{align*}
	Since this holds true for every ${\cal F}_s-$stopping time $\tau$, then
	\begin{align*}
		v(x,i)=&\lim_{N\to+\infty}v_N(x,i)
		\\
		=&\lim_{N\to+\infty}\sup_{\tau\in{\cal S}}E_{x,i}\left[\int_{0}^{\tau\wedge\tau_{N}}e^{-\rho_s}\Pi({X}_s,\theta_s)ds+e^{-\rho_{\tau\wedge\tau_{N}}}v_N(X_{\tau\wedge\tau_{N}},\zeta_{\tau\wedge\tau_{N}},\theta_{\tau\wedge\tau_{N}})1_{\{\tau\wedge\tau_N<\infty\}}\right]\\
		=&\sup_{\tau\in{\cal S}}E_{x,t,i}\left[\int_{0}^{\tau}e^{-\rho_s}\Pi({X}_s,\theta_s)ds+e^{-\rho_\tau}v(X_{\tau},\zeta_\tau,\theta_\tau)1_{\{\tau<\infty\}}\right]\\
		\geq& \sup_{\tau\in{\cal S}}E_{x,t,i}\left[\int_{0}^{\tau}e^{-\rho_s}\Pi({X}_s,\theta_s)ds+e^{-\rho_\tau}h(X_{\tau},\theta_\tau)1_{\{\tau<\infty\}}\right]=V^*(x,t,i),
	\end{align*}
	the last inequality following in light of Lemma \ref{v > h}.
	
	In order to obtain the reverse inequality, we note that, by combining Lemma \ref{v > h} with the first part of this proof, $v_N$ is the unique viscosity solution to \eqref{ODE expected value}-\eqref{ODE terminal condition} in $A_N$ if we replace $\hat{h}$ with $v_N$ and $A_v^i=\{(x,t)\in A_N:v(x,t,i)>h(x,i)\}$. 
	If $A_v^N=\cup_{i\in\Theta}A_v^i=A_v\cap A_N$ and $\tilde{\tau}_N\equiv\inf\{s>0\,:(X_s,\zeta_s,\theta_s)\notin A_v^N\}=\tau_v\cap\tau_N$, then, in view of Lemma \ref{Corollary existence}, it follows that 
	\begin{align*}
		v_N(x,t,i)&=E_{x,t,i}\left[\int_0^{\tilde{\tau}_N}e^{-\rho_s}\Pi(X_s,\theta_s)ds-e^{-\rho_{\tilde{\tau}_N}}v_N(X_{\tilde{\tau}_N},\zeta_{\tilde{\tau}_N},\theta_{\tilde{\tau}_N})1_{\tilde{\tau}_N<\infty}\right].
	\end{align*}
	With a similar argument to the previous one, we obtain
	\begin{align*}
		&v(x,t,i)=\lim_{N\to+\infty}v_N(x,t,i)
		\\
		&=E_{x,t,i}\Bigg[\int_{0}^{\tau_v\wedge T}e^{-\rho_s}\Pi({X}_s,\theta_s)ds+e^{-\rho_{\tau_v\wedge T}}v_N(X_{\tau_v\wedge T},\zeta_{\tau_v\wedge T},\theta_{\tau_v\wedge T})1_{\{\tau_v\wedge T<\infty\}}\Bigg]\\
		&=E_{x,t,i}\left[\int_{0}^{\tau_v\wedge T}e^{-\rho_s}\Pi({X}_s,\theta_s)ds+e^{-\rho_{\tau_v\wedge T}}h(X_{\tau_v\wedge T},\theta_{\tau_v\wedge T})1_{\{\tau_v\wedge T<\infty\}}\right]\\
		&\leq V^*(x,t,i),
	\end{align*}
	which concludes the proof.
\end{proof}

\section{The One-dimensional Case}\label{The One Dimensional Case}
In this section, we present stronger results concerning the optimal stopping problem, when $\theta$ is a homogeneous continuous Markov chain and $X$ is a one dimensional diffusion. To clarify, we make the following assumption:
\begin{assumption}\label{A section 5.1}
	The set $D$ is an interval of the form $]a,b[$ with $-\infty\leq a<b\leq\infty$, $n=m=1$ and the Borel measurable functions $\alpha(\cdot,i):I\to\mathbb{R}$ and  $\sigma(\cdot,i):D\to\mathbb{R}$ are such that the SDE \eqref{Eq SDE}, for each initial condition, has a unique strong solution $(W,X)$ on the filtered probability space $({\Om}, {\fcal}, ({\fcal}_s)_{s\geq 0}, {P})$ that remains in $D$ for all times. Additionally, the process $\theta$ is such that, for every $j,m\in\Theta$,
	\begin{align}
		&P(\nu_n-\nu_{n-1}\leq s\,\vert\, \theta_{\nu_{n-1}}=j)=1-e^{\lambda_j {s}},\quad\text{for all }s\geq 0,\\
		&P(\theta_{\nu_n}=m\,\vert\,\theta_{\nu_{n-1}}=j)=p_{j,m},
	\end{align}
	with $\lambda_j>0$ and $p_{j,m}\in[0,1]$ verifying $\sum_{j\neq m}p_{j,m}=1$ and $p_{jj}=0$. Finally, for every $n_1,n_2\in\mathbb{N}$, the random variables $(\nu_{n_1}-\nu_{n_1-1})\text{ and } (\nu_{n_2}-\nu_{{n_2}-1})$ are independent. Henceforward, we adopt the following notation: $\lambda_{{ j,m}}=\lambda_j\times p_{j,m}$.	
\end{assumption}
As previously mentioned, our main goal is to find the optimal strategy $\tau^*$ which maximizes the function defined in \eqref{Eq expected outcome}, in the interval $I\subseteq D$. Under the previous assumption, we prove that $V^*$, defined in \eqref{Optimal Stopping Problem} is no longer time dependent, which means that, in the homogeneous case, we simply need to use the process $(X,\theta)$. To ensure that the optimal stopping problem is well defined, we make the following assumption:
\begin{assumption}\label{A section 5.2}
	The Borel measurable functions  $\Pi(\cdot,i),h(\cdot,i),r(\cdot,i):D\to\mathbb{R}$, with $i\in \Theta$, are such that: 
	\begin{itemize}
		\item[(1)] Assumption \ref{Assumption : last one} is satisfied;
		\item[(2)] $h(\cdot,i)\in C( D)$;
		\item[(3)] $r(\cdot,i)>0$.
	\end{itemize}	
\end{assumption}
Now, we introduce the differential operator
\begin{align*}
	(\tilde{\cal L}\phi)(x,i)&=-r(x,i)\phi(x,i)+\alpha(x,i)\phi'(x,i)+\frac{1}{2}\sigma^2(x)\phi''(x,i)+\sum_{j\neq i}\lambda_{i,j}\left(\phi(x,j)-{\phi}(x,i)\right),
\end{align*}
where $\phi'(\cdot,i)$ and $\phi''(\cdot,i)$ are respectively the first and the second derivatives of $\phi$ in the first argument.
Additionally, we consider the system of HJB equations
\begin{align}
	\label{Eq HJB ODE}
	&\min\Big\{-(\tilde{\cal L}v)(x,i)-\Pi(x,i),v(x,i)+{h}(x,i)\Big\}=0, \quad\text{for all }(x,i)\in D\times\Theta,
\end{align} 
and the boundary condition 
\begin{align}\label{unideimensional boundary condition}
	v(x)=-h(x) \quad\text{for all }x\in\partial I.
\end{align}
Before we define the concept regarding the solution we consider throughout this section, we will introduce some notation. We denote by $DC(I)$ the set of functions that are the difference of two convex function in $I$. Recall that $f\in DC(I)$ if and only if $f$ is absolutely continuous in $I$ $(f\in AC(I))$ and $f'$ is of bounded variation $(f'\in BV(I))$. Furthermore, if  $f\in DC(I)$, then the left-hand side derivative of $f_{-}'$ exists and its second distributional derivative is a measure. From the Lebesgue's Decomposition Theorem, there are two $\sigma$-finite signed measures, $f''_{ac}(x)dx$ and $f''_{s}(dx)$ such that: 
\begin{itemize}
	\item $f''(dx)=f''_{ac}(x)dx+f''_{s}(dx)$;
	\item $f''_{ac}(x)dx$ is absolutely continuous with respect to the Lebesgue measure, $\mu$;
	\item $f''_{s}(dx)$ and $\mu$ are singular. 
\end{itemize}
\begin{defi}\label{Definition of solution}
	Let $v:\overline{I}\times\Theta\to\mathbb{R}$ be a function such that $v(\cdot,i)\in DC(I)$, for each $i\in\Theta$, and  $\tilde{\cal L}^{ac}$ be the differential operator defined by 
	\begin{align*}
		(\tilde{\cal L}^{ac}v)(x,i)&=-r(x,i)v(x,i)+\alpha(x){v}'_{-}(x,i) +\frac{1}{2}\sigma^2(x){v}_{ac}''(x,i)+\sum_{j\neq i}\lambda_{i,j}\left(\phi(x,j)-{\phi}(x,i)\right).
	\end{align*}
	$v$ is a solution to the system of HJB equations \eqref{Eq HJB ODE} if it satisfies, for every $i\in\Theta$,
	\begin{align}
		&\min\Big\{-(\tilde{\cal L}^{ac}v)(x,i)-\Pi(x,i),v(x,i)+{h}(x,i)\Big\}=0, \quad\mu-\text{almost everywhere in } D.
	\end{align} 
\end{defi}
\begin{remark}\label{Reamrk definition of solution}
	Our analysis will rely on solutions $v$ satisfying Definition \ref{Definition of solution} such that, for each $i\in\Theta$:
	\begin{itemize}
		\item[1)]  $-v''_{s}(dx,i)$ is a positive measure;
		\item[2)] $\supp v_s''(dx,i)\subseteq\{x\in I:v(x,i)=-{h}(x,i)\}$. 
	\end{itemize}
\end{remark}
Note that this definition was already used in the literature of optimal stopping and optimal switching as one can see, for instance, in Lamberton and Zervos \cite{lamberton2013optimal} and Zervos \cite{zervos2003problem}). Henceforward, we will adopt a similar argumentation to the one present at the first aforementioned reference.

Theorem \ref{verification theorem - unidimensional case} provides a general verification result for the optimal stopping problem when $\theta$ is a homogeneous Markov chain and $X$ is a one-dimensional switching diffusion. To prove such a result, we need an appropriate It\^o formula. For one-dimensional semimartingales, the Meyer-It\^o formula is a well-known generalization of the classical It\^o formula that relies on the concept of \textit{Local Time}, which is valid for any function $f\in DC(I)$ (see Protter \cite{protter1990stochastic}). In Lemma \ref{ito lemma}, we provide an appropriate Meyer-It\^o formula for the process $(X,\theta)$. From now on, we will denote by $L^c$ the local time associated with the process $X$ at level $c$, by $A^i$ the process
$${A}_t^{i}=\frac{1}{2}\int_D{\phi}_{s}''(dc,i)L_t^c,\quad  \text{for } i=0,1,$$
and by ${\cal A}^{ac}$ the operator
$$
({\cal A}^{ac}\phi)(x,i)\equiv-r(x)\phi(x,i)+\alpha(x)\phi_{-}'(x,i)+\frac{1}{2}\sigma^2(x)\phi_{ac}''(x,i),
$$
where $\phi$ is such that $\phi(\cdot,i)\in DC(I)$, for each $i=0,1$. 

\begin{lemma}\label{ito lemma}
	Let $({X},\theta)$ be the $(1+1)$-dimensional process defined by Equations \eqref{theta} and \eqref{Eq SDE}, taking into account Assumptions \ref{A section 5.1}. Furthermore, assume that $\phi:D\times\Theta\to\mathbb{R}$ is such that $\phi(\cdot,i)\in DC(D)$, with $i\in\Theta$. Then, for every $t\in[0,T^D]$
	\begin{align}
		E_{x,i}\left[e^{-\rho_t}\phi({X}_t,\theta_t)\right]&=\phi(x,i)+E_{x,i}\left[\int_0^{t}e^{-\rho_s}(\tilde{\cal L}^{ac})\phi(X_s,\theta_s)ds\right]+E_{x,i}\left[\sum_{j=1}^k\int_{0}^{t}\nonumber e^{-\rho_s}1_{\{\theta_s=j\}}d{A}_s^{j}\right]\\
		&+E_{x,i}\left[\int_{0}^{t}{\phi'}_{-}(X_s,i)\sigma(X_s,i)dW_s\right].
	\end{align}
\end{lemma}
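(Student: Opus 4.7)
The plan is to combine the classical Meyer-Itô formula for $DC$ functions, applied piecewise between successive jumps of $\theta$, with the compensator formula for the jump times of the homogeneous Markov chain. Fix a sample path and observe that on each interval $[\nu_n, \nu_{n+1})$ the chain is constant, equal to $j = \theta_{\nu_n}$, so $X$ solves the SDE with frozen coefficients $\alpha(\cdot,j), \sigma(\cdot,j)$ and the function $\phi(\cdot,j)\in DC(D)$. The one-dimensional Meyer-Itô formula then gives, for each such interval,
\begin{align*}
\phi(X_{t\wedge\nu_{n+1}^-}, j) - \phi(X_{t\wedge\nu_n}, j)
&= \int_{t\wedge\nu_n}^{t\wedge\nu_{n+1}^-} ({\cal A}^{ac}\phi)(X_s, j)\, ds + \int_{t\wedge\nu_n}^{t\wedge\nu_{n+1}^-}\phi'_-(X_s, j)\sigma(X_s, j)\, dW_s \\
&\quad + \int_{t\wedge\nu_n}^{t\wedge\nu_{n+1}^-} dA_s^{j},
\end{align*}
where the last term, by definition of $A^j$, is precisely the contribution of the singular part of $\phi''(\cdot,j)$ via the occupation-time identity $\int_\mathbb{R} L_t^c\,g(c)\,dc = \int_0^t g(X_s)\,d\langle X\rangle_s$ applied to the absolutely continuous part.

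Next I would sum these identities over $n\geq 0$, inserting at each $\nu_n$ the chain-jump contribution
\[
\phi(X_{\nu_n},\theta_{\nu_n})-\phi(X_{\nu_n^-},\theta_{\nu_n^-}) = \phi(X_{\nu_n},\theta_{\nu_n})-\phi(X_{\nu_n},\theta_{\nu_n^-}),
\]
where the equality uses the $P$-a.s.\ continuity of $X$ at the (totally inaccessible) jump times $\nu_n$. This yields a pathwise Meyer-Itô formula for $\phi(X_t,\theta_t)$. Applying integration by parts to $e^{-\rho_t}\phi(X_t,\theta_t)$ (noting that $\rho$ is continuous and of finite variation with density $r(X,\theta)$, so no bracket terms appear), I obtain
\begin{align*}
e^{-\rho_t}\phi(X_t,\theta_t)
&= \phi(x,i) + \int_0^t e^{-\rho_s}({\cal A}^{ac}\phi-r\phi)(X_s,\theta_s)\,ds + \sum_{j=1}^k \int_0^t e^{-\rho_s}\mathbf{1}_{\{\theta_s=j\}}\,dA_s^{j} \\
&\quad + \int_0^t e^{-\rho_s}\phi'_-(X_s,\theta_s)\sigma(X_s,\theta_s)\,dW_s + \sum_{\nu_n\leq t} e^{-\rho_{\nu_n}}\bigl[\phi(X_{\nu_n},\theta_{\nu_n})-\phi(X_{\nu_n},\theta_{\nu_n^-})\bigr].
\end{align*}

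The final step is to take $E_{x,i}[\cdot]$. Under the integrability following from Assumptions \ref{A section 5.1} and \ref{A section 5.2}, together with local boundedness of $\phi$ on a compact exhaustion of $D$, the stochastic integral is a genuine martingale of zero mean (which the statement nevertheless displays explicitly). For the jump sum, Assumption \ref{A section 5.1} gives exponential inter-arrival times of rate $\lambda_{\theta_s}$ with transitions $p_{\theta_s,m}$, so the dual predictable projection (compensator) of the counting measure of jumps into state $m$ is $\lambda_{\theta_s,m}\,ds$; hence
\[
E_{x,i}\!\left[\sum_{\nu_n\leq t} e^{-\rho_{\nu_n}}\bigl[\phi(X_{\nu_n},\theta_{\nu_n})-\phi(X_{\nu_n},\theta_{\nu_n^-})\bigr]\right] = E_{x,i}\!\left[\int_0^t e^{-\rho_s}\!\sum_{j\neq\theta_s}\lambda_{\theta_s,j}\bigl(\phi(X_s,j)-\phi(X_s,\theta_s)\bigr)ds\right].
\]
Adding this switching term to $({\cal A}^{ac}\phi-r\phi)(X_s,\theta_s)$ reconstructs $(\tilde{\cal L}^{ac}\phi)(X_s,\theta_s)$, and the stated identity follows.

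The main obstacle is technical rather than conceptual: one must verify that the piecewise Meyer-Itô identities can be glued over the countably many jumps of $\theta$ to produce the integrals over $[0,t]$ appearing on the right-hand side (in particular, that the local-time contributions from successive intervals assemble into $\int_0^t \mathbf{1}_{\{\theta_s=j\}}dA_s^j$), and that the infinite sum of discounted jump contributions is uniformly integrable so that Fubini and the compensator formula may be applied. Both reduce, after localization by stopping at $t\wedge T^D$ and on a compact subset of $D$, to finiteness of $E_{x,i}[\sup_{s\leq t} e^{-\rho_s}]$ and local boundedness of $\phi$, which are ensured by Assumption \ref{A section 5.2}(3) and the $DC$ regularity.
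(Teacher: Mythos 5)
Your proposal is correct and follows essentially the same route as the paper: apply the one-dimensional Meyer--It\^o formula on each inter-jump interval $[\nu_n,\nu_{n+1})$ where $\theta$ is frozen, use the occupation-times formula to separate the absolutely continuous and singular parts of $\phi''$, integrate by parts against $e^{-\rho}$, and then take expectations of the chain-jump sum (the paper does this by conditioning on $(\nu_{n-1},\theta_{\nu_{n-1}})$ and using the explicit exponential law, which is the compensator computation you invoke). One small notational caveat: with the paper's definition $({\cal A}^{ac}\phi)(x,i)=-r(x)\phi(x,i)+\alpha(x)\phi'_-(x,i)+\tfrac12\sigma^2(x)\phi''_{ac}(x,i)$, the killing term $-r\phi$ only appears \emph{after} the integration by parts, so your undiscounted interval identity should carry only $\alpha\phi'_-+\tfrac12\sigma^2\phi''_{ac}$ and the discounted integrand should be $({\cal A}^{ac}\phi)$ rather than $({\cal A}^{ac}\phi-r\phi)$; as written you count $-r\phi$ twice, though your final reconstruction of $\tilde{\cal L}^{ac}$ shows the intended bookkeeping is the correct one.
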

\begin{proof}	
	Let  $\nu_n$, with $n\in\mathbb{N}$, be defined as in Section \ref{The stochastic process}, and assume that $t\in[\nu_n,\nu_{n+1}[$, then $\phi$ admits the following decomposition
	\begin{align*}
		e^{-\rho_t}\phi(X_t,\theta_t)=&e^{-\rho_t}\phi(X_t,\theta_t)-e^{-\rho_{\nu_n}}\phi(X_{\nu_n},\theta_{\nu_n})
		+\sum_{j=1}^n\left(e^{-\rho_{\nu_{j}}}\phi(X_{\nu_{j}},\theta_{\nu_{j}})-e^{-\rho_{\nu_{j}^-}}\phi(X_{\nu_{j}^-},\theta_{\nu_{j}^-})\right)\\
		+&\sum_{j=2}^n\left(e^{-\rho_{\nu_j^-}}\phi(X_{\nu_j^-},\theta_{\nu_j^-})-e^{-\rho_{\nu_{j-1}}}\phi(X_{\nu_{j-1}},\theta_{\nu_{j-1}})\right)
		+ e^{-\rho_{\nu_1^-}}\phi(X_{\nu_1^-},\theta_{\nu_1^-}).
	\end{align*}
	Therefore, for any $t\geq 0$,
	\begin{align*}
		e^{-\rho}\phi(X_t,\theta_t)=&e^{-\rho_{t\wedge\nu_1^-}}\phi(X_{t\wedge\nu_1^-},i)+\sum_{j=1}^\infty\Big(e^{-\rho_{\nu_j}}\phi(X_{\nu_j},\theta_{\nu_j})-e^{-\rho_{t\wedge\nu_{j}^-}}\phi(X_{t\wedge\nu_{j}^-},\theta_{t\wedge\nu_{j}^-})\Big)1_{\{ t\geq\nu_{j}\}}\\
		+&\sum_{j=2}^\infty\left(e^{-\rho_{t\wedge\nu_j^-}}\phi(X_{t\wedge\nu_j^-},\theta_{t\wedge\nu_j^-})-e^{-\rho_{\nu_{j-1}}}\phi(X_{\nu_{j-1}},\theta_{t\wedge\nu_{j-1}})\right)1_{\{ t\geq\nu_{j-1}\}}.	
	\end{align*}
	Since $\theta_t$ is constant for $\nu_n\leq t<\nu_{n+1}$, we can say, without loss of generality, that $\theta_t=i$. Then, from {the Meyer}-It\^o Formula (see Theorem IV.70 in Protter \cite{protter1990stochastic}), we get
	\begin{align*}
		\phi(X_{t},i)-\phi(X_{\nu_n},i)=&\int_{\nu_n}^{t}\alpha(X_s,i){\phi'}_-(X_s,i)ds+\frac{1}{2}\int_D\phi''(dc,i)L_t^c+\int_0^{t}{\phi'}_{-}(X_s,i)\sigma(X_s,i)dW_s,
	\end{align*} 
	where $\phi$ is such that $\phi(\cdot,i)\in DC(D)$.
	From the Occupation Times Formula we obtain
	$$
	\int_D{\phi}_{ac}''(dc,i)L_t^c=\int_{\nu_n}^t\sigma^2(X_s,i)\phi''_{ac}(X_s,i)ds,
	$$
	which allows us to write
	\begin{align*}
		\phi(X_t,i)-\phi(X_{\nu_n},i)=&\int_{\nu_n}^{t}\alpha(X_s){\phi'}_-(X_s,i)+\frac{1}{2}\sigma^2(X_s,i)\phi_{ac}''(X_s,i)ds\nonumber+\frac{1}{2}\int_D\phi_{s}''(dc,i)L_t^c\\
		+&\int_{\nu_n}^{t}{\phi'}_{-}(X_s,i)\sigma(X_s,i)dW_s.
	\end{align*}
	By using the integration by parts, we have
	\begin{align}\nonumber
		e^{-\rho_t}\phi(X_t,i)-e^{-\rho_{\nu_n}}\phi(X_{\nu_n},i)=&\int_{\nu_n}^{t}e^{-\rho_s}({\cal A}^{ac}\phi)(X_s,i)ds+\int_{\nu_n}^te^{-\rho_s}dA_s^i\\
		+&\int_{\nu_n}^{t}e^{-\rho_s}{\phi'}_{-}(X_s,i)\sigma(X_s)dW_s.\label{Mayer-Ito Formula continuous semimartingales}
	\end{align}
	Taking into account that this argument remains valid in any interval $[\nu_n,\nu_{n+1}[$, with $n\in \mathbb{N}$, we obtain that, for every $0<t\leq T^D$
	\begin{align*}
		e^{-\rho_t}\phi({X}_t,\theta_t)=&\phi(x,i)+\int_0^{t}e^{-\rho_t}({\cal A}^{ac}\phi)(X_s,\theta_s)ds+\sum_{j=1}^k\int_{0}^{t} e^{-\rho_s}1_{\{\theta_t=j\}}d{A}_s^{j}\\
		+&\sum_{j=1}^\infty\Big(e^{-\rho_{\nu_j}}\phi(X_{\nu_j},\theta_{\nu_j})-e^{-\rho_{t\wedge\nu_{j}^-}}\phi(X_{t\wedge\nu_{j}^-},\theta_{t\wedge\nu_{j}^-})\Big)1_{\{ s\geq\nu_{j}\}}\\
		+&\int_0^{t}e^{-\rho_s}{\phi'}_{-}(X_s,\theta_s)\sigma(X_s,\theta_s)dW_s.
		\nonumber
	\end{align*}
	Now, by using a similar argument to the one used in \eqref{jumps decomposition}, we obtain the result.
\end{proof}

\begin{teo}\label{verification theorem - unidimensional case}
	Let $V^*$ be the value function defined as in \eqref{Optimal Stopping Problem}, taking into account Assumptions \ref{A section 5.1} and \ref{A section 5.2}. Assume that there is a function $v:\overline{I}\times\Theta\to\mathbb{R}$ such that $v(\cdot,i)\in DC(I)$ and $v$ is a solution to the system of HJB equations  \eqref{Eq HJB ODE} in the sense of Definition \ref{Definition of solution} and the process
	\begin{equation}\label{martingale}
		{\left\{\int_{0}^{t}{\phi'}_{-}(X_s,i)\sigma(X_s,i)dW_s\right\}}_{t\geq 0}\quad\text{is a martingale}.
	\end{equation}
	Furthermore, assume that $v$ is such that 1) in Remark \ref{Reamrk definition of solution} is fulfilled. The following statements are true:
	\begin{itemize}
		\item[1)]$v(x,i)\geq J(x,i,\tau)$, for all $\tau\in {\cal S}$;
		\item[2)]additionally, if $v$ is such that statement 2) in Remark \ref{Reamrk definition of solution} holds true, the boundary condition \eqref{unideimensional boundary condition} is satisfied and
		\begin{equation}\label{growth conditon}
			\{v(X_\tau,\theta_\tau)\}_{\tau\in {\cal S}} \text{ is a uniformly integrable family of random variables.} 
		\end{equation}
		Then, $V^*=v$, $\tau^*=\inf\{s\geq 0:v(X_s,\theta_s)\leq -{h}(X_s,\theta_s)\}$ is the optimal strategy.
	\end{itemize}
\end{teo}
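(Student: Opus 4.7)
My approach is the standard verification-theorem strategy, adapted to the $DC(I)$ setting via the Meyer--It\^o formula of Lemma \ref{ito lemma}. Apply that formula to $v$ stopped along an appropriate sequence of bounded stopping times, then exploit the two one-sided HJB inequalities enforced by Definition \ref{Definition of solution}, namely $-(\tilde{\cal L}^{ac}v)(x,i) \geq \Pi(x,i)$ $\mu$-a.e.\ and $v \geq -h$, together with the positivity of $-v''_s(\cdot,i)$ from condition 1) of Remark \ref{Reamrk definition of solution}, to turn the generator term and the local-time term into favorable inequalities. Hypothesis \eqref{martingale} removes the Brownian integral after taking expectations.

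\textbf{Part 1).} Fix $\tau \in {\cal S}$ and, with $\{U_N\}$ a compact exhaustion of $\overline I \times [0,\infty)$, set $\tau_N = \tau \wedge N \wedge \inf\{s \geq 0 : X_s \notin U_N\}$. Lemma \ref{ito lemma} applied on $[0,\tau_N]$ gives
\begin{align*}
E_{x,i}\!\left[e^{-\rho_{\tau_N}} v(X_{\tau_N},\theta_{\tau_N})\right] = v(x,i) + E_{x,i}\!\left[\int_0^{\tau_N}\!\! e^{-\rho_s}(\tilde{\cal L}^{ac}v)(X_s,\theta_s)\,ds\right] + E_{x,i}\!\left[\sum_{j=1}^{k}\int_0^{\tau_N}\!\! e^{-\rho_s} 1_{\{\theta_s=j\}} dA_s^j\right].
\end{align*}
Because $-v''_s(dc,j)$ is a positive measure, the local-time integrals $dA^j_s = \tfrac{1}{2}\int_D v''_s(dc,j)\,dL^c_s$ are non-positive, so the last expectation is $\leq 0$. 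Combined with $-(\tilde{\cal L}^{ac}v) \geq \Pi$ $\mu$-a.e.\ and $v \geq -h$, this yields
$$v(x,i) \geq E_{x,i}\!\left[\int_0^{\tau_N} e^{-\rho_s}\Pi(X_s,\theta_s)\,ds - e^{-\rho_{\tau_N}} h(X_{\tau_N},\theta_{\tau_N})\right].$$
Letting $N\to\infty$, I use the uniform-integrability package of Assumption \ref{Assumption : last one} (monotone convergence for $\Pi^+$, uniform integrability for $h$) together with \eqref{growth conditon} to interchange limit and expectation, obtaining $v(x,i) \geq J(x,i,\tau)$ and hence $v \geq V^*$.

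\textbf{Part 2).} Consider the candidate stopping time $\tau^{*} = \inf\{s \geq 0 : v(X_s,\theta_s) \leq -h(X_s,\theta_s)\}$. On the random set $\{s < \tau^*\}$ one has $v(X_s,\theta_s) > -h(X_s,\theta_s)$, so condition 2) of Remark \ref{Reamrk definition of solution} implies that the occupation time spent by $X_s$ in $\supp v''_s(\cdot,\theta_s)$ is zero, whence $dA^{\theta_s}_s \equiv 0$ on $[0,\tau^*)$; simultaneously, the min in the HJB equation must be realized by its first argument, so $-(\tilde{\cal L}^{ac}v)(X_s,\theta_s) = \Pi(X_s,\theta_s)$ on the same set. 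Inserting $\tau^* \wedge \tau_N$ into Lemma \ref{ito lemma} therefore produces the \emph{equality}
$$v(x,i) = E_{x,i}\!\left[\int_0^{\tau^* \wedge \tau_N} e^{-\rho_s}\Pi(X_s,\theta_s)\,ds + e^{-\rho_{\tau^* \wedge \tau_N}} v(X_{\tau^* \wedge \tau_N},\theta_{\tau^* \wedge \tau_N})\right].$$
Passing to the limit $N \to \infty$ via \eqref{growth conditon} and Assumption \ref{Assumption : last one}, and using that $v(X_{\tau^*},\theta_{\tau^*}) = -h(X_{\tau^*},\theta_{\tau^*})$ by continuity of $h$ on $\{\tau^* < T^I\}$ together with the boundary condition \eqref{unideimensional boundary condition} on $\{\tau^* = T^I\}$, yields $v(x,i) = J(x,i,\tau^*) \leq V^*(x,i)$, which closes the loop.

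\textbf{Main obstacle.} The most delicate point is handling the singular part of $v''$ rigorously: showing both that the local-time term in the Meyer--It\^o formula is non-positive for the inequality in Part 1 and that it vanishes identically on $[0,\tau^*)$ for the equality in Part 2. The support condition $\supp v''_s(\cdot,i) \subseteq \{v(\cdot,i) = -h(\cdot,i)\}$ must be combined with the occupation-times formula to show that, with probability one, $L^c_s$ does not charge the set $\{v = -h\}$ while $s < \tau^*$; since $\{v = -h\}$ may be a general closed set of positive measure, this is where the argument is most sensitive. A secondary technical point is the control of the limit at $T^I$, which can be infinite with positive probability; here I rely on the strict positivity $r \geq \epsilon_i > 0$ from Assumption \ref{A functions} to guarantee $e^{-\rho_s} \to 0$ on $\{T^I = \infty\}$, together with \eqref{growth conditon} to handle $e^{-\rho_{\tau_N}} v(X_{\tau_N},\theta_{\tau_N})$ in the limit.
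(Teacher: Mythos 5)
Your proposal is correct and follows essentially the same route as the paper: apply the Meyer--It\^o formula of Lemma \ref{ito lemma} along a localizing sequence, use the sign of the singular local-time term and the two HJB inequalities to get $v\geq J(\cdot,\tau)$, and then obtain equality along $\tau^*$ because the drift term and the local-time term both vanish before $\tau^*$, passing to the limit via the uniform-integrability hypotheses. The only blemish is that in Part 1 you invoke \eqref{growth conditon}, which is a hypothesis of statement 2) only --- but since you have already replaced $v(X_{\tau_N},\theta_{\tau_N})$ by $-h(X_{\tau_N},\theta_{\tau_N})$ before taking $N\to\infty$, that condition is not actually needed there, exactly as in the paper's argument.
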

\begin{proof}
	We start by proving statement 1) of Theorem \ref{verification theorem - unidimensional case}. Fix $\tau\in {\cal S}$ and let $\{\tau_n\}_{n\in\mathbb{N}}\subset {\cal S}$ be an increasing sequence, such that $\tau_n\nearrow\tau$. Then, by using Lemma \ref{ito lemma} and condition \eqref{martingale}, we obtain
	\begin{align*}
		J(x,t,i,\tau_n)=&E_{x,t,i}\left[\int_{0}^{\tau_n}e^{-\rho_s}\Pi(X_s,\theta_s)ds-e^{-\rho_{\tau_n}}h(X_{\tau_n},\theta_{\tau_n})\right]\\
		=&v(x,i)+E_{x,t,i}\left[\int_{0}^{\tau_n}e^{-\rho_s}\left(\Pi(X_s,\theta_s)+(\tilde{\cal L}^{ac})v(X_s,\theta_s)\right)ds\right]\\
		-&E_{x,t,i}\left[e^{-\rho_{\tau_n}}\left(v(X_{\tau_n},\theta_{\tau_n})+h(X_{\tau_n},\theta_{\tau_n})\right)\right]+E_{x,t,i}\left[\sum_{j=1}^k\int_{0}^{\tau_n}\nonumber e^{-\rho_s}1_{\{\theta_s=j\}}d{A}_s^{j}\right].
	\end{align*}
	In light of Definition \ref{Definition of solution}, we get
	\begin{align*}
		J(x,t,i,\tau_n)\leq
		&v(x,i)+E_{x,t,i}\left[\sum_{j=1}^k\int_{0}^{t}\nonumber e^{-\rho_s}1_{\{\theta_s=j\}}d{A}_s^{j}\right].
	\end{align*}
	To proceed, we note that the local time associated with the process $X$ at level $c$, $L^c$, is increasing and c\`adlag. Therefore,
	\begin{align}\label{dA negativo}
		d{A}_s^{i}=\frac{1}{2}d\int_D{v}_{s}''(dc,i)L_t^c=\frac{1}{2}d\int_{\supp_i}{v}_{s}''(dc,i)L_t^c\leq 0,
	\end{align}
	and, consequently, $J(x,t,i,\tau_n)\leq v(x,i)$. With a similar argument to \eqref{Auxiliary limit 1} and \eqref{Auxiliary limit 2}, we have that
	$$
	\lim_{n\to\infty}J(x,t,i,\tau_n)=J(x,t,i,\tau)\leq v(x,i),\quad\text{for all }\tau\in{\cal S}.
	$$
	To prove statement 2), we consider the stopping time $\tau_0\in{\cal S}$ given by $\tau_0 \equiv\inf\{s\geq 0:v(X_s,\theta_s)\leq -{h}(X_s,\theta_s)\}$ and $\{\tau_n\}_{n\in\mathbb{N}}\subset {\cal S}$, which is an increasing sequence verifying $\tau_n\nearrow T^I$. Then, we obtain from Lemma \ref{ito lemma} that
	\begin{align}
		e^{-\rho_{\tau_0\wedge\tau_n}}&v({X}_{\tau_0\wedge\tau_n},\theta_{\tau_0\wedge\tau_n})=v(x,i)+\int_0^{{\tau_0\wedge\tau_n}}e^{-\rho_s}(\tilde{\cal L}^{ac})v(X_s,\theta_s)ds\nonumber\\
		+&\sum_{j=1}^k\int_{0}^{\tau_0\wedge\tau_n}\nonumber e^{-\rho_s}1_{\{\theta_s=j\}}d{A}_s^{j}+\int_{0}^{\tau_0\wedge\tau_n}{v'}_{-}(X_s,i)\sigma(X_s,i)dW_s.
	\end{align}
	Consequently, taking into account that $\tau_0=\tau_0\wedge T^I$ and the boundary problem \eqref{Eq HJB ODE}-\eqref{unideimensional boundary condition} is satisfied (in the sense of Definition \ref{Definition of solution})
	\begin{align}
		\int_0^{{\tau_0\wedge\tau_n}}e^{-\rho_s}\Pi(X_s,\theta_s)ds&-e^{-\rho_{\tau_0}}h({X}_{\tau_0},\theta_{\tau_0})1_{\{\tau_0\leq\tau_n\}}=v(x,i)\\
		&-e^{-\rho_{\tau_n}}v({X}_{\tau_n},\theta_{\tau_n})1_{\{\tau_0>\tau_n\}}\nonumber
		+\sum_{j=1}^k\int_{0}^{\tau_0\wedge\tau_n}\nonumber e^{-\rho_s}1_{\{\theta_s=j\}}d{A}_s^{j}\\
		&+\int_{0}^{\tau_0\wedge\tau_n}{v'}_{-}(X_s,i)\sigma(X_s,i)dW_s.\nonumber
	\end{align}
	Assuming that $\theta_s=j$, with $j\in\Theta$ and for every $s\in[\nu_n\wedge\tau_0,\nu_{n+1}\wedge\tau_0[$, then, by combining the definition of $\tau_0$, Equation \eqref{dA negativo}, and statement 2) of Remark \ref{Reamrk definition of solution}, we get
	\begin{align*}
		d{A}_s^{i}= 0.
	\end{align*}
	Thus, from condition \eqref{martingale}, we obtain
	\begin{align*}
		&E_{x,t,i}\left[\int_0^{{\tau_0\wedge\tau_n}}e^{-\rho_s}\Pi(X_s,\theta_s)ds-e^{-\rho_{\tau_0}}h({X}_{\tau_0},\theta_{\tau_0})1_{\{\tau_0\leq\tau_n\}}\right]=v(x,i)\\
		&~~~~~~~~~~~~~~~~~~~~~~~~~~~~~~~~~~~~~~~~~~~~~~~~~~~~~~~~~~~~~~~~~~~~-E_{x,t,i}\left[e^{-\rho_{\tau_n}}v({X}_{\tau_n},\theta_{\tau_n})1_{\{\tau_0>\tau_n\}}\right].
	\end{align*}
	Consequently, with a similar argument to the one used in \eqref{Auxiliary limit 1}, we obtain
	$$
	\lim_{n\to\infty}E_{x,t,i}\left[\int_0^{{\tau_0\wedge\tau_n}}e^{-\rho_s}\Pi(X_s,\theta_s)ds\right]=E_{x,t,i}\left[\int_0^{{\tau_0\wedge T^I}}e^{-\rho_s}\Pi(X_s,\theta_s)ds\right].
	$$
	Additionally, $\{e^{-\rho_{\tau_0}}h({X}_{\tau_0},\theta_{\tau_0})1_{\{\tau_0\leq\tau\}}\}_{\tau\in{\cal S}}$ is a uniformly integrable family of random variables and, consequently,
	\begin{equation}\label{uniform integrability h}
		\lim_{n\to\infty}E_{x,t,i}\left[e^{-\rho_{\tau_0}}h({X}_{\tau_0},\theta_{\tau_0})1_{\{\tau_0\leq\tau_n\}}\right]=E_{x,t,i}\left[e^{-\rho_{\tau_0}}h({X}_{\tau_0},\theta_{\tau_0})1_{\{\tau_0\leq T^I\wedge\infty\}}\right].
	\end{equation}
	From Assumption \ref{A section 5.2}, $0\leq e^{-\rho_{\tau}}<1$  for every $\tau\in{\cal S}$, and, accordingly,  \linebreak$\{e^{-\rho_{\tau}}v(X_{\tau},\theta_{\tau})1_{\tau_0>\tau}\}_{\tau\in{\cal S}}$ is a uniformly integrable family of random variables. If $T^I=\infty$, then $e^{-\rho_{\tau_n}}\to 0$ and, consequently, $e^{-\rho_{\tau_n}}v(X_{\tau_n},\theta_{\tau_n})1_{\tau_0>\tau_n}\to 0$, $P-$almost surely. Additionally, if $T^I<\infty$, then  $ e^{-\rho_{\tau_n}}v(X_{\tau_n},\theta_{\tau_n})1_{\tau_0>\tau_n}\to -e^{-\rho_{T^I}}h(X_{T^I},\theta_{T^I})1_{\tau_0>T^I}=0$, $P-$almost surely.  Therefore, if condition \eqref{growth conditon} holds true, we get \begin{equation*}
		J(x,t,i,\tau^*)=V^*(x,i)=v(x,i).
	\end{equation*}
	
\end{proof}
Before we finish this section, we note that, if we relax the assumption $r(.,i)>0$ for every $i\in\Theta$, the condition \eqref{growth conditon} may not be, in general, sufficient to keep the result true. In this case, a condition like 
\begin{equation*}
	\lim_{n}E_{x,i}[e^{-\rho_{\tau_n}}\vert v(X_{\tau_n},\theta_{\tau_n})\vert]=0, \text{ with }\tau_n\nearrow T^I,
\end{equation*}
would be required.

\section*{Acknowledgements}
Funding: This work was supported by the Funda\c{c}\~ao para a Ci\^encia e Tecnologia (FCT) [grant number SFRH/BD/102186/2014].

\section*{References}

\bibliography{myrefs}

\end{document}